\newcommand{\bj}{\bar{j}}
\newcommand{\al}{\alpha}
\newcommand{\bal}{\bar{\alpha}}
\newcommand{\be}{\beta}
\newcommand{\bbe}{\bar{\beta}}
\newcommand{\ga}{\gamma}
\newcommand{\Ga}{\Gamma}
\newcommand{\De}{\Delta}
\newcommand{\de}{\delta}
\newcommand{\eps}{\epsilon}
\newcommand{\ka}{\kappa}
\newcommand{\m}{\mu}
\newcommand{\n}{\nu}
\newcommand{\bn}{\bar{\nu}}
\newcommand{\tta}{\theta}
\newcommand{\Si}{\Sigma}
\newcommand{\scrD}{\mathscr{D}}
\newcommand{\fz}{\mathfrak{z}}
\newcommand{\calG}{\mathcal{G}}
\newcommand{\calH}{\mathcal{H}}
\newcommand{\calK}{\mathcal{K}}
\newcommand{\om}{\omega}
\newcommand{\Om}{\Omega}
\newcommand{\bbC}{\mathbb{C}}
\newcommand{\bbN}{\mathbb{N}}
\newcommand{\del}{\operatorname{\partial}}
\newcommand{\bdel}{\operatorname{\bar{\partial}}}
\newcommand{\wge}{\wedge}
\theoremstyle{plain}
\newtheorem{theorem}{Theorem}[section]
\newtheorem{lemma}[theorem]{Lemma}
\newtheorem{proposition}[theorem]{Proposition}
\newtheorem{corollary}[theorem]{Corollary}
\theoremstyle{definition}
\newtheorem{definition}[theorem]{Definition}
\numberwithin{equation}{section}
\author[Aleyasin]{S. Ali Aleyasin}
\address{Department of Mathematics\\
Stony Brook University\\ Stony Brook, NY 11794}
\email{aleyasin@math.sunysb.edu}
\thanks{I am grateful to my adviser, Prof. Xiu-Xiong Chen, for introducing me to this circle of problems and for his constant encouragement. I am also grateful to Prof. Eric Bedford, Prof. Simon Donaldson,  and to Prof. Claude LeBrun for generously sparing  time and for fruitful discussions, and to Kai Zheng  for carefully reading the draft of the first part of the note and for making useful suggestions.
}
\title[ the space of ALE K\"ahler metrics]{ the space of K\"ahler potentials on 
an Asymptotically locally euclidean K\"ahler manifold }
\begin{document}

\maketitle

\begin{abstract}
In this note, we shall prove geodesic convexity of the space of K\"ahler potentials on an ALE K\"ahler manifold.
This extends earlier results in the compact case proved in the fundamental work of X-X. Chen.
We further prove the boundedness from below of the Mabuchi energy, and give an  prove
the uniqueness of scalar-flat metrics in this category when \(c_1 \leq 0\).
53C55, 
35J60 

\keywords{Space of K\"ahler potentials \and Degenerate complex Monge-Amp\`ere equation}
\end{abstract}

\section{introduction}
In this note, we prove a generalisation of a the result of  X-X. Chen reported in \cite{Ch00} for compact K\"ahler manifolds to the case of asymptotically locally euclidean K\"ahler manifolds.

After the work of Bourguignon, Donaldson, Mabuchi, and Semmes, it is now well-known that the geodesics in the space of K\"ahler metrics have deep connections to the questions of (non)-existence and uniqueness of extremal K\"ahler metrics and metrics of constant scalar curvature.
In a fundamental work, 
X-X. Chen established the existence of geodesics in the space of K\"ahler potentials with bounded weak \(dd^c\)-derivatives (also referred to as weak \(C^{1,1}\) solutions), and by-passed the lack of existence of higher derivatives for proving the uniqueness of constant-scalar-curvature and extremal metrics.
Since the estimates in \cite{Ch00} require strict positivity of the boundary conditions, in a more recent work, W. He in \cite{He12} studied the existence of geodesics  with possible degeneracies on the end points.
There it is proved that even with degenerate boundary conditions, 
the solution will have weak bounded laplacian in the space direction.
Also, by virtue of an observation made by Berndtsson, 
the first time derivative is also bounded, see \cite{Be11}.
Making use of  the calculations by W. He in \cite{He12} in an essential way, 
X-X. Chen and the author extended this to singular potentials to derive weighted estimates of the laplacian close to the singularity \cite{ACh13}.

In the current work, we prove the existence of weak solutions with bounded \(dd^c\)-derivatives and derive decay estimates for the potential and time derivatives. This in particular implies that on each time slice the metric is an ALE metric in the extended sense.
Further, we show the uniqueness of ALE metrics of constant scalar curvature in each K\"ahler class.
If we view geodesics as curves with vanishing acceleration, as we shall see, the main r\^{o}le will be played by certain curves with preassigned non-zero acceleration which we will refer to as the \(\eps\)-geodesics.

\begin{theorem}
\label{thm-1-1}
Let \(M\) be an asymptotically locally euclidean K\"ahler manifold.
Assume that \(\phi_0\) and \(\phi_1\) are two potentials belonging to \(\calH_{ALE}\).
Then, there is a unique geodesic with spatial laplacian, \(\De\) satisfying the decay property:

\begin{eqnarray}
\label{decay-1}
	\vert \De \phi  \vert \leq C, \phi = O(r^{-2n + 2}), \del_t \phi = O(r^{-2n + 2}), \vert \del_{tt} \phi \vert \leq C
\end{eqnarray}

wherein \(C\) depends only on the end points and on the lower bound on the curvature of the reference metric \(\om\).
In particular, at each time slice, the potential satisfies 
\[
	\phi(t) = \tilde{\calH}_{ALE}
\]

\end{theorem}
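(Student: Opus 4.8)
The plan is to obtain the geodesic as a limit of $\eps$-geodesics, following the strategy of Chen \cite{Ch00} and He \cite{He12}, but carried out with uniform control of the decay at infinity. First I would set up the $\eps$-geodesic equation: on the cylinder $M \times [0,1] \times S^1$ with complexified time variable, one solves the Dirichlet problem
\begin{eqnarray}
\label{eps-geod-eq}
(\om + \sqrt{-1}\,\del\bdel \Phi)^{n+1} = \eps\, f\, \om_0^{n+1}, \qquad \Phi|_{t=0} = \phi_0,\ \Phi|_{t=1} = \phi_1,
\end{eqnarray}
where $f > 0$ is a fixed positive function and $\om_0$ is the product reference metric. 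For $\eps > 0$ this is a nondegenerate complex Monge--Amp\`ere equation, so existence and higher interior regularity on compact subsets follow from the standard theory; the real work is producing \emph{a priori} estimates that are uniform in $\eps$ and that see the ALE structure. The $C^0$ estimate and the bound $|\del_t\Phi| \leq C$ come from the maximum principle and Berndtsson's observation \cite{Be11} exactly as in the compact case, since these arguments are local plus a barrier at infinity. The bound $|\De\Phi| \leq C$ (equivalently $\Phi \in C^{1,1}$ in the weak sense) is where He's computations in \cite{He12} enter in an essential way; one transplants them to the ALE setting, using the lower curvature bound on $\om$ that is hypothesized.

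The genuinely new point — and the main obstacle — is the decay. To prove $\phi = O(r^{-2n+2})$ and $\del_t\phi = O(r^{-2n+2})$, I would construct barrier functions adapted to the asymptotic geometry. On the ALE end $M \setminus K \cong (\bbR^{2n} \setminus B)/\Gamma$, the Green's function of the Euclidean Laplacian decays like $r^{-2n+2}$, so I would use $\psi_\pm = \pm A r^{-2n+2}$ (with $A$ depending on $\phi_0,\phi_1$) as super/subsolutions of the linearized equation in the spatial directions, together with the fact that $\phi_0,\phi_1 \in \calH_{ALE}$ already have this decay by definition of the class. The subtlety is that the $\eps$-geodesic equation is not exactly the Laplace equation: one must absorb the error terms coming from $\eps f$ and from the deviation of the background metric from the flat cone metric, which themselves decay, and check that the comparison still closes. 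Propagating the spatial decay to a decay estimate on $\del_t\phi$ requires differentiating the equation in $t$ and running the same barrier argument on the $t$-derivative, which satisfies a linear elliptic equation with coefficients controlled by the $C^{1,1}$ bound already obtained.

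With the uniform estimates \eqref{decay-1} in hand for the $\eps$-geodesics, I would let $\eps \to 0$: by the uniform $C^{1,1}$ bound and Arzel\`a--Ascoli on compact subsets (plus the uniform decay to control behavior at infinity), a subsequence converges in $C^{1,\alpha}_{loc}$ to a function $\phi$ which is a weak geodesic with bounded spatial Laplacian, satisfies the decay \eqref{decay-1}, and has the prescribed boundary values; the bound on $\del_{tt}\phi$ is inherited in the weak sense. The statement that $\phi(t) \in \tilde{\calH}_{ALE}$ for each slice is then a matter of unwinding definitions: the decay $|\De\phi| \leq C$ together with $\phi = O(r^{-2n+2})$ says precisely that $\om + \sqrt{-1}\,\del\bdel\phi(t)$ is an ALE K\"ahler metric in the extended (weak $C^{1,1}$) sense. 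Finally, uniqueness follows from the standard convexity argument: if $\phi, \phi'$ are two such geodesics with the same endpoints, the function $t \mapsto \sup_M(\phi - \phi')(t)$ is convex (by the maximum principle applied to the difference of the two Monge--Amp\`ere equations, valid because the decay lets the sup be attained or approached in a controlled way), vanishes at $t=0,1$, hence is $\leq 0$; symmetrically it is $\geq 0$, so $\phi = \phi'$.
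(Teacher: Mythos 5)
Your overall strategy is the one the paper actually follows: Theorem \ref{thm-1-1} is obtained from Theorem \ref{thm-1-2} by solving the \(\eps\)-approximate problem \(\Om_\phi^{n+1}=\eps f\,\Om^{n+1}\), proving \(L^\infty\), \(\vert\del_t\phi\vert\) and laplacian bounds together with the weighted decay uniformly in \(\eps\), and passing to the limit by Arzel\`a--Ascoli; this is exactly the content of Sections \ref{section-3} and \ref{section-4}, and your uniqueness argument via the comparison principle is consistent with (indeed more explicit than) what the paper records. Two steps, however, are waved through in a way that hides real work. First, existence of the \(\eps\)-solution is not ``standard theory'': the Dirichlet problem is posed on the non-compact product \(M\times\Si\), which is not covered by the bounded-domain or compact-manifold-with-boundary results of Caffarelli--Kohn--Nirenberg--Spruck or Guan. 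The paper has to exhaust the strip by the compact domains obtained from \(B_T\times\Si\) and prove \(L^\infty\), gradient and laplacian estimates independent of \(T\) before any \(\eps\)-solution on \(M\times\Si\) exists; your uniform estimates are the right ingredients for that exhaustion, but the step must be carried out.

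Second, and more substantively, your decay mechanism runs barriers for ``the linearized equation,'' i.e.\ for the laplacian \(\De_\phi\) of the unknown metric, and for \(\del_t\phi\) you differentiate the equation in \(t\) claiming the coefficients are ``controlled by the \(C^{1,1}\) bound.'' The \(C^{1,1}\) bound controls \(\om_\phi\) only from above; uniformly in \(\eps\) there is no lower bound on \(\om_\phi\), so \(g_\phi^{\al\bbe}\) is not controlled, and a function such as \(r^{2-2n}\) is not a supersolution of \(\De_\phi\) in any \(\eps\)-uniform sense (its \(dd^c\) is not a signed form, so its \(\om_\phi\)-trace has no sign). Moreover, after differentiating in \(t\), the values of \(\del_t\phi\) on \(t=0,1\) are not prescribed data, so that maximum principle does not close as stated. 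The paper sidesteps both difficulties: the upper bound and decay of \(\phi\) follow from convexity in \(t\) (so \(\phi\le(1-t)\phi_0+t\phi_1\)), the lower bound from the explicit global subsolution \eqref{l-infty} compared at the level of the Monge--Amp\`ere operator itself, and the decay of \(\del_t\phi\) then comes for free from convexity in \(t\): \(\del_t\phi(0)\le\del_t\phi(t)\le\del_t\phi(1)\), with the endpoint derivatives pinched between difference quotients of \(\phi\) against these barriers, all of which are \(O(r^{2-2n})\). If you replace your linearized-operator barriers by this comparison at the Monge--Amp\`ere level, your proposal coincides with the paper's proof.
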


One could derive the Euler-Lagrange equation associated to the energy of curves on \(\calH_{ALE}\) to be the following:
\[
	\calG(\phi):= \phi'' - g^{\al \bbe}_\phi \phi'_\al \phi'_{\bbe} = 0
\]
wherein the prime sign denotes time derivative.
As in the compact, we interpret the equation in the way that the problem is reduced to solving a degenerate complex Monge-Amp\`ere equation.
Namely, let us define \(\Si = [0,1] \times S^1\), and view it as a Riemann surface with boundary.
Extend the potentials on the \(S^1\) factor in the trivial way.
Further, let \(\pi: M \times \Si \to M\) be the obvious projection.
By pulling back the metric \(\om\), we shall obtain \(\Om:= \pi^* \om\).
Then, on \(M \times \Si\) we may consider \(\Om\)-plurisubharmonic potentials.
One may then see by a calculation  that 
\begin{equation}
\label{11}
\Om_\phi^n = \calG(\phi) {\om_\phi^n \over \om^n}
\end{equation}
 which allows us to solve the boundary value problem \eqref{eq-01} instead.
 See \cite{Do96} for more on this construction.
Notice however that since there is not yet know a lower bound on the rank of the complex hessian, \(\phi_{;\m \bn}\) of the solutions to \eqref{eq-01}, one cannot
guarantee the non-degeneracy of the volume form \(\om_\phi^n\).
As a result,
satisfying \eqref{eq-01}, although a necessary condition, is not
sufficient for \(\calG(\phi)=0\) to hold.
One may therefore think of \eqref{eq-01} as generalised geodesics.

The proof of the Theorem \ref{thm-1-1} 
is based on the resolution of the geodesic equation and proving appropriate asymptotic behaviour as the following theorem states:

This theorem will be a corollary to the next theorem, asserting the existence of a weak solution to the geodesic equation.

\begin{equation}
\label{eq-01}
\begin{cases}
 \Om_\phi^{n+1} =
0\\
\phi(x,i) = \phi(i), \text{  } i = 0, 1. 
\end{cases}
\end{equation}

\begin{theorem}
\label{thm-1-2}
Assume that the boundary conditions in the boundary value problem \eqref{eq-01} belong to \(\calH_{ALE}\).
Then, there exists a weak solution in the sense that it is continuous with bounded weak derivative satisfying the decay rates \eqref{decay-1}.
\end{theorem}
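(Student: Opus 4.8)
The plan is to follow the scheme of X-X. Chen from \cite{Ch00}, replacing the degenerate equation \(\Om_\phi^{n+1}=0\) by a family of genuinely non-degenerate complex Monge--Amp\`ere equations, solving each of them, deriving a priori estimates that are uniform in the perturbation parameter and, crucially, uniform up to the ALE end, and then letting the parameter tend to zero. Fix a smooth positive \((n{+}1,n{+}1)\)-form \(\Theta\) on \(M\times\Si\) that decays faster than \(r^{-2n}\) along the \(M\)-factor, and for \(\eps\in(0,1]\) consider the \(\eps\)-geodesic equation
\[
	\Om_{\phi_\eps}^{n+1} = \eps\,\Theta,\qquad \phi_\eps(\cdot,i)=\phi(i)\quad(i=0,1),\qquad \Om_{\phi_\eps}:=\Om+\ddc\phi_\eps>0 ,
\]
arranged so that \(\phi_\eps\to 0\) on the end; these are the curves of small preassigned acceleration alluded to in the introduction.

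For fixed \(\eps\), existence of \(\phi_\eps\) is obtained by exhausting \(M\) by the compact sublevel sets \(M_R=\{\rho\le R\}\) of a radius function, solving on each compact manifold with corners \(M_R\times\Si\) the Dirichlet problem with the prescribed data on the two \(\Si\)-faces and a fixed ALE model potential on \(\del M_R\times\Si\), and invoking the Caffarelli--Kohn--Nirenberg--Spruck and Guan theory for existence together with the interior and boundary second order estimates, exactly as in \cite{Ch00, He12}; since the estimates recalled below are uniform in \(R\), letting \(R\to\infty\) yields \(\phi_\eps\) on all of \(M\times\Si\). (Alternatively one may run the continuity method directly on \(M\times\Si\) in weighted H\"older spaces \(C^{k,\ga}_\de\), using that the Laplacian of the reference metric is an isomorphism for non-exceptional weights \(\de\in(2-2n,0)\) on an ALE manifold.)

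The a priori estimates, all uniform in \(\eps\) and in \(R\), are the following. For the \(C^0\) bound and the spatial decay one compares \(\phi_\eps\) with the barriers \(\pm A\rho^{2-2n}\): on the end these are harmonic for the asymptotically flat metric to leading order, hence sub- and super-solutions of the Monge--Amp\`ere operator once \(A\) is chosen large in terms of the boundary data, and the maximum principle gives \(|\phi_\eps|\le C\rho^{2-2n}\), that is \(\phi_\eps=O(r^{-2n+2})\). Differentiating the equation along the \(\Si\)-direction and invoking Berndtsson's observation (cf. \cite{Be11}) bounds \(\del_t\phi_\eps\), and a further barrier comparison upgrades this to \(\del_t\phi_\eps=O(r^{-2n+2})\). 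The Laplacian estimate \(|\De\phi_\eps|\le C\) is obtained from W. He's computation in \cite{He12} (see also \cite{ACh13}); it is essentially local and therefore survives on the end, the only point being to check that the curvature terms entering it stay bounded where \(\om\) is asymptotically flat, which holds under the ALE hypothesis and is where the dependence of \(C\) on the lower curvature bound enters. Finally \(|\del_{tt}\phi_\eps|\le C\) follows from the convexity in \(t\) of the \(\eps\)-energy of these curves, or by differentiating \eqref{11} twice in \(t\); here Chen's argument uses global integration by parts, which must be redone on \(M\times\Si\) with the decay rates just established ensuring that the boundary terms at infinity vanish.

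With these uniform estimates in hand, \(\{\phi_\eps\}\) is precompact in \(C^{1,\ga}_{\mathrm{loc}}(M\times\Si)\) and the weighted bounds pass to the limit; any subsequential limit \(\phi\) is \(\Om\)-plurisubharmonic, continuous, has bounded weak \(\ddc\)-derivative, satisfies the decay rates \eqref{decay-1}, and solves \(\Om_\phi^{n+1}=0\) in the pluripotential sense, which is precisely the weak solution asserted in the theorem. The main difficulty is concentrated in the third paragraph: one must produce barriers with exactly the \(r^{2-2n}\) rate that are genuine sub-/super-solutions of the degenerate Monge--Amp\`ere operator near the end, keep the He-type second order estimate effective where the reference metric is only asymptotically Euclidean, and re-run the global integrations by parts in the \(\del_{tt}\) estimate so that no contribution survives at infinity; once this is done the passage \(\eps\to0\) is routine.
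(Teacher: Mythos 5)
Your proposal follows essentially the same route as the paper: approximate the homogeneous equation by complex Monge--Amp\`ere equations with strictly positive right-hand side (the \(\eps\)-geodesics), solve each of these by exhausting \(M\times\Si\) with compact domains and the Caffarelli--Kohn--Nirenberg--Spruck/Guan Dirichlet theory, derive \(L^\infty\), decay, and Laplacian estimates uniform in the exhaustion parameter and in \(\eps\) (the paper uses convexity in \(t\) for the upper bound and the explicit subsolution \(t\phi_0+(1-t)\phi_1+Cr^{3-4n}t^2\), playing the same role as your barriers), and then pass to the limit by Arzel\`a--Ascoli. The only cosmetic differences are your choice of a decaying right-hand side \(\eps\,\Theta\) in place of the paper's right-hand side asymptotically equal to the constant \(\eps\), and the precise form of the barriers; the overall scheme coincides.
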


\textit{Proof of Theorem \ref{thm-1-2}}
For the proof, we shall approximate the zero right hand side by strictly positive ones that tend to zero and derive estimates independent of the lower bound of the right hand side, \(f\), that will guarantee the existence of a weak solution by the Arzel\`a-Ascoli theorem. 
This is done in the following sections.
In Section \ref{section-3}, we construct classical solutions for positive right hand side, \(f\), on the strip.
In Section \ref{section-4}, we derive weighted estimates independent of the lower bound of the right hand side, and thereby guarantee the decay rate of the laplacian of the weak solutions. 
This will prove that the same bounds hold weakly once one passes to the uniform limit obtained by applying the Arzel\`a-Ascoli theorem on compact subsets of the strip.


\begin{theorem}
\label{uniqueness}
Let \(M^n\) be an ALE K\"ahler space with \(c_1(M) \leq 0\).
Then, there is at most asymptotically locally euclidean K\"ahler metric of constant scalar curvature in each cohomology class.
In the particular case when \(c_1 = 0\), 
in each K\"ahler class there exists one and only one scalar-flat metric which is further Ricci-flat.
\end{theorem}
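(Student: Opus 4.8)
\textit{Proof of Theorem \ref{uniqueness} (sketch of the plan).}

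The plan is to run the Chen--Bando--Mabuchi uniqueness argument with the weak geodesics of Theorem \ref{thm-1-1} in the r\^{o}le of the compact ones. The first task is to make sense of the Mabuchi K-energy \(\calM\) on \(\calH_{ALE}\) through its variation \(\frac{d}{dt}\calM(\phi_t) = -\int_M \dot\phi_t\,\bigl(S(\om_{\phi_t}) - \bar S\bigr)\,\om_{\phi_t}^n\), and to check that this integral --- together with those appearing in the first and second variations --- converges absolutely and that Stokes' theorem produces no boundary term at infinity. This is exactly where the asymptotics are used: on an ALE K\"ahler manifold the curvature and the volume ratio \(\om_\phi^n/\om^n\) decay at the Euclidean rate, and \(\phi,\del_t\phi = O(r^{-2n+2})\) from \eqref{decay-1} supplies the remaining control. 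Note that, the curvature of an ALE metric tending to zero, any ALE constant-scalar-curvature metric is automatically scalar-flat, so \(\bar S = 0\) throughout.

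\textbf{Uniqueness.} Let \(\om_j = \om + dd^c\phi_j\), \(j=0,1\), be two ALE constant-scalar-curvature metrics in one cohomology class, \(\phi_j\in\calH_{ALE}\), and let \(\phi_t\) be the weak \(C^{1,1}\) geodesic of Theorem \ref{thm-1-1} joining them, obeying \eqref{decay-1}. Along a genuine geodesic Chen's second-variation identity reads \(\frac{d^2}{dt^2}\calM(\phi_t) = \int_M |\calD_{\phi_t}\dot\phi_t|^2\,\om_{\phi_t}^n \ge 0\), where \(\calD\) is the Lichnerowicz operator, so \(t\mapsto\calM(\phi_t)\) is convex; and because \(S(\om_0) = S(\om_1) = 0\) the first-variation formula gives \(\frac{d}{dt}\calM(\phi_t)\big|_{t=0^+} = \frac{d}{dt}\calM(\phi_t)\big|_{t=1^-} = 0\). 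A convex function on \([0,1]\) whose one-sided derivatives vanish at both endpoints is constant, so \(\calM\) is affine along \(\phi_t\) and \(\calD_{\phi_t}\dot\phi_t \equiv 0\) for every \(t\); thus \(\nabla^{1,0}\dot\phi_t\) is a holomorphic vector field. But \(\dot\phi_t = O(r^{-2n+2})\) decays at the ALE end, and a decaying holomorphic vector field on an ALE manifold must vanish (a Hartogs/Liouville argument on the asymptotically Euclidean quotient when \(\dim_{\bbC} M\ge 2\); alternatively, when \(c_1(M) < 0\) there are no nonzero holomorphic vector fields at all). Hence \(\nabla^{1,0}\dot\phi_t = 0\), so \(\dot\phi_t\) is a decaying pluriharmonic function and therefore \(\dot\phi_t \equiv 0\); consequently \(\phi_t\) is independent of \(t\) and \(\om_0 = \om_1\).

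The only delicate point is that the geodesic is merely weak \(C^{1,1}\), so the second-variation identity is not literally at our disposal. As in the compact theory one remedies this with the \(\eps\)-geodesics of the Introduction, for which the computation is classical and yields \(\frac{d^2}{dt^2}\calM(\phi^\eps_t) \ge -C\eps\) with \(C\) controlled by \eqref{decay-1}; letting \(\eps\to0\) and invoking the uniform decay estimates of Theorem \ref{thm-1-1} --- which, unlike the degenerate-boundary results of He and Berndtsson, hold in the extended ALE sense --- one recovers convexity and the identity \(\calD_{\phi_t}\dot\phi_t = 0\) in the limit. I expect this limiting step --- carrying the regularization through while keeping every integral convergent and every boundary contribution at infinity under control --- to be the main obstacle, so the real labour is the bookkeeping of the weighted estimates rather than a new idea beyond Chen's.

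\textbf{Existence for \(c_1 = 0\).} Here no geodesic is needed. When \(c_1(M) = 0\) the Ricci form of any \(\om\in\calH_{ALE}\) equals \(dd^c f\) for a potential \(f\) decaying at the Euclidean rate (the ALE model being Ricci-flat), and, since scalar-flatness implies Ricci-flatness on an ALE K\"ahler manifold with \(c_1 = 0\) by the same Liouville argument applied to the Ricci potential, finding the desired metric amounts to solving the complex Monge--Amp\`ere equation \(\om_\phi^n = e^{f}\om^n\) with \(\phi\in\calH_{ALE}\). This is the ALE analogue of Yau's theorem: one runs the continuity method, with the \(C^0\) and complex Hessian bounds furnished by the maximum principle and the Evans--Krylov estimate, and the decay of \(\phi\) obtained from a weighted maximum principle together with Moser iteration based on the Euclidean Sobolev inequality at the end. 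The metric so produced is Ricci-flat, and it is the unique constant-scalar-curvature --- equivalently scalar-flat --- metric in its class by the first part.
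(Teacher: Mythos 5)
Your overall strategy (K-energy convexity along geodesics in the spirit of Chen--Bando--Mabuchi, plus a Liouville-type lemma for decaying holomorphic vector fields, plus Joyce's ALE Calabi conjecture for existence when \(c_1=0\)) matches the paper's, and your endpoint argument, your use of Lemma \ref{bounded-v-field}-type reasoning, and your treatment of the \(c_1=0\) existence are all consistent with what is done here. But the central step of your uniqueness argument has a genuine gap, and it is exactly the step you flag as ``the main obstacle'': you propose to prove \(\frac{d^2}{dt^2}\calK(\phi^\eps_t)\ge -C\eps\) along \(\eps\)-geodesics and then let \(\eps\to 0\) to recover \(\scrD\dot\phi_t=0\) along the weak \(C^{1,1}\) geodesic. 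This limit cannot be taken as stated: for the weak geodesic of Theorem \ref{thm-1-1} the quantity \(\scrD\dot\phi\) involves third derivatives and is not even defined (the paper says so explicitly before introducing the \(\eps\)-geodesics), and the constants in the higher-order decay estimates (Proposition \ref{higher-derivatives}) degenerate as \(\eps\to 0\), so a bound of the form \(-C\eps\) with \(C\) ``controlled by \eqref{decay-1}'' is not available. In effect your plan reduces to the general uniqueness problem with no curvature-sign hypothesis, which is not what this paper proves and is substantially harder.

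The idea you are missing is where the hypothesis \(c_1(M)\le 0\) enters --- in your write-up it is never used in the uniqueness argument except parenthetically. The paper first uses Joyce's ALE Calabi conjecture to choose the reference metric \(\om\) in the class with \(Ric(\om)\le 0\), and then, along each \emph{fixed} \(\eps\)-geodesic (which is smooth enough by Section \ref{section-3} and Proposition \ref{higher-derivatives}), integrates by parts (Gaffney--Stokes, justified by the decay rates) to rewrite the extra term in the second variation \eqref{12} as
\[
-\int_M \calG(\phi)\,K_\phi\,\om_\phi^n \;=\; \int_M \frac{\vert \nabla \calG(\phi)\vert_\phi^2}{\calG(\phi)}\,\om_\phi^n \;-\; \int_M \calG(\phi)\,Ric(\om)\wedge\om_\phi^{n-1},
\]
both terms being non-negative precisely because \(\calG(\phi)=\eps\,\om^n/\om_\phi^n>0\) and \(Ric(\om)\le 0\). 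Hence \(\calK\) is genuinely convex along the \(\eps\)-geodesic for every fixed \(\eps>0\); combined with the vanishing of \(\frac{d}{dt}\calK\) at the scalar-flat endpoints this gives \(\scrD\dot\phi\equiv 0\) at fixed \(\eps\), where the Lichnerowicz operator is honestly defined, and the decaying-holomorphic-vector-field lemma finishes the argument --- no \(\eps\to 0\) limit of the second-variation identity is ever needed. Without this Ricci-sign trick your proposal does not close. (Your \(c_1=0\) existence paragraph is fine in substance, though the paper simply cites Joyce's Theorem 8.5.1 rather than re-running the continuity method, and it derives ``scalar-flat \(\Rightarrow\) Ricci-flat'' via harmonicity of the Ricci form and ALE Hodge theory rather than a Liouville argument on the Ricci potential.)
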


Besides the uniqueness issue, we can further prove the boundedness from below of  \emph{Mabuchi's \(\calK\)-energy} as asserted in the following:

\begin{theorem}
\label{boundedness}
Let \(M\) be an asymptotically locally euclidean K\"ahler manifold.
Then, in each cohomology class, 
the metric of constant scalar curvature realises the global minimum of the \(\calK\)-energy. 
\end{theorem}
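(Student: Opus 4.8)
To prove Theorem~\ref{boundedness} I would carry out the Mabuchi--Donaldson--Semmes variational argument, now available over ALE manifolds thanks to Theorems~\ref{thm-1-1} and~\ref{thm-1-2}: the $\calK$-energy is convex along the geodesics of $\calH_{ALE}$, and a constant-scalar-curvature metric, being a critical point, is then a global minimiser. The first task is to make sense of $\calK$ on $\calH_{ALE}$. Fixing the reference metric $\om$, I would define $\calK(\phi)$ for $\phi\in\calH_{ALE}$ by integrating the Mabuchi one-form $\de\phi\mapsto-\int_M\de\phi\,S(\om_\phi)\,\om_\phi^n$ along any smooth path from $0$ to $\phi$. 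The decay~\eqref{decay-1} of elements of $\calH_{ALE}$, together with the resulting decay of the metric $\om_\phi$ and its curvature, guarantees that every integral appearing in the standard cocycle computation converges absolutely and carries no flux through the sphere at infinity; hence $\calK$ is finite and path-independent on $\calH_{ALE}$ exactly as in the compact case. I would also record that on an ALE space the curvature of any metric in the class decays at the end, so a metric of constant scalar curvature there is necessarily scalar-flat and is precisely a critical point of $\calK$.

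Next I would establish convexity along geodesics. Along a smooth path $\phi_t$ in $\calH_{ALE}$ one has $\tfrac{d}{dt}\calK(\phi_t)=-\int_M\del_t\phi_t\,S(\om_{\phi_t})\,\om_{\phi_t}^n$, and, after the usual integrations by parts,
\begin{equation*}
\frac{d^2}{dt^2}\calK(\phi_t)=\int_M|\calD_{\om_{\phi_t}}\del_t\phi_t|^2_{\om_{\phi_t}}\,\om_{\phi_t}^n+\calR(\phi_t),
\end{equation*}
where $\calD$ is the Lichnerowicz operator and $\calR(\phi_t)$ is an expression depending linearly on the geodesic operator $\calG(\phi_t)$. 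The only new point compared with the compact case is that these integrations by parts take place on a non-compact $M$: the decay estimates~\eqref{decay-1} for $\del_t\phi_t$ and $\phi_t$, and the induced decay of the metric and curvature of $\om_{\phi_t}$, force every boundary integral over a large geodesic sphere to tend to $0$, so the identity is valid. Along a genuine geodesic ($\calG(\phi_t)\equiv0$) this yields $\tfrac{d^2}{dt^2}\calK(\phi_t)\ge0$.

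Since Theorem~\ref{thm-1-2} supplies only a weak ($C^{1,1}$, with the decay~\eqref{decay-1}) geodesic, I would pass through the $\eps$-geodesics $\phi^\eps_t$ of the construction --- the smooth solutions of the approximating problem $\Om_{\phi}^{n+1}=f_\eps$ with the same boundary data $\phi_0,\phi_1\in\calH_{ALE}$, where $f_\eps>0$ and $f_\eps\to0$. For these the computation above is classical, and $\calG(\phi^\eps_t)=O(\eps)$ uniformly, with constants controlled by the $\eps$-independent weighted estimates of Section~\ref{section-4}; hence $\tfrac{d^2}{dt^2}\calK(\phi^\eps_t)\ge-C\eps$. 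Now let $\om_\psi$ be a constant-scalar-curvature (hence scalar-flat) ALE metric in the class and $\phi\in\calH_{ALE}$ arbitrary, and take $\phi^\eps_t$ to be the $\eps$-geodesic joining $\psi$ to $\phi$; its endpoints do not depend on $\eps$, so $\calK(\phi^\eps_0)=\calK(\psi)$ and $\calK(\phi^\eps_1)=\calK(\phi)$ for every $\eps$. Because $S(\om_\psi)\equiv0$ we have $\tfrac{d}{dt}\calK(\phi^\eps_t)|_{t=0}=0$, so integrating the inequality $\tfrac{d^2}{dt^2}\calK(\phi^\eps_t)\ge-C\eps$ twice on $[0,1]$ gives $\calK(\phi)=\calK(\phi^\eps_1)\ge\calK(\phi^\eps_0)-C'\eps=\calK(\psi)-C'\eps$. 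Letting $\eps\to0$ yields $\calK(\phi)\ge\calK(\psi)$ for every $\phi\in\calH_{ALE}$; thus $\om_\psi$ realises the global minimum of $\calK$, and in particular $\calK$ is bounded from below.

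The hard part is the convexity step in the weak setting, where two difficulties compound. First, the solution of~\eqref{eq-01} is only $C^{1,1}$ and carries no known lower bound on the rank of its complex Hessian, so convexity of $\calK$ cannot be read off the weak geodesic directly but must be extracted through the $\eps$-regularisation, keeping the error uniform in $\eps$. Second, the whole argument lives on a non-compact manifold, so each integration by parts --- in the definition of $\calK$, in its first and second variations, and in the error bound for the $\eps$-geodesics --- a priori leaves a boundary term at the ALE end; it is precisely the weighted decay estimates~\eqref{decay-1} that make $\calK$ finite and well defined and that kill these boundary contributions, and controlling them uniformly in $\eps$ is where most of the work goes.
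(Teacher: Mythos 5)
Your overall skeleton (approximate by $\eps$-geodesics, show near-convexity of $\calK$, use the vanishing first variation at the constant-scalar-curvature endpoint, integrate twice in $t$ and let $\eps\to0$) matches the paper's strategy, but the crucial convexity step is handled differently and, as written, has a genuine gap. In the second variation along an $\eps$-geodesic the error term is $-\int_M\calG(\phi)\,K_\phi\,\om_\phi^n$, and you dispose of it by asserting $\calG(\phi^\eps)=O(\eps)$ uniformly, citing the $\eps$-independent estimates of Section \ref{section-4}. This is not justified: the $\eps$-geodesic equation says $\calG(\phi^\eps)\,\om_{\phi^\eps}^n=\eps\,\om^n$, i.e. $\calG(\phi^\eps)=\eps\,\om^n/\om_{\phi^\eps}^n$, so a uniform $O(\eps)$ bound requires a positive lower bound on the volume ratio $\om_{\phi^\eps}^n/\om^n$ independent of $\eps$ --- precisely what is not available (the paper stresses that no lower bound on the rank of the complex Hessian is known; the $\eps$-independent estimates are only $L^\infty$ and Laplacian \emph{upper} bounds). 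Even if you rewrite the error term exactly as $-\eps\int_M K_{\phi^\eps}\,\om^n$, a bound uniform in $\eps$ would need curvature (fourth-derivative) control uniform in $\eps$, whereas the constants in Proposition \ref{higher-derivatives} depend on $\eps$. Hence the inequality $\tfrac{d^2}{dt^2}\calK(\phi^\eps_t)\ge-C\eps$ with $C$ independent of $\eps$ is unsubstantiated, and with it the limiting argument.

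The paper avoids this issue by invoking the standing hypothesis of its Section \ref{mabuchi}, namely $c_1(M)\le0$, which is absent from your argument: by Joyce's ALE Calabi theorem one chooses the reference metric $\om$ in the class with $Ric(\om)\le0$, and after the integrations by parts (justified for each fixed $\eps>0$ by the decay of Proposition \ref{higher-derivatives} together with Gaffney's Stokes theorem) the troublesome term becomes $\int_M\vert\nabla\calG(\phi)\vert_\phi^2\,\calG(\phi)^{-1}\,\om_\phi^n-\int_M\calG(\phi)\,Ric(\om)\wge\om_\phi^{n-1}$, which is non-negative term by term. Thus $\calK$ is genuinely convex along every $\eps$-geodesic --- no uniform-in-$\eps$ smallness of the error is needed --- and the conclusion follows, as you describe, from the vanishing first variation at the scalar-flat endpoint. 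If you want the statement without a sign hypothesis on $c_1$, you need a new idea for the error term; the uniform $O(\eps)$ claim does not supply one.
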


In the case of vanishing first Chern class, `Scalar-flat ALE K\"ahler metrics are Ricci-flat'. 
This assertion can already be proved using more standard methods as we shall describe in 
\textsection \ref{mabuchi}.
For the existence of Ricci-flat metrics in the case of vanishing \(c_1\) we rely on  the work of Joyce on the extension of the Calabi conjecture to the ALE K\"ahler spaces. 
Along with Theorem \ref{boundedness}, when \(c_1(M)=0\), the \(\calK\)-energy is bounded from below and there  always exists a metric of zero scalar curvature in each class which realises the minimum.

In the case of \(c_1(M) < 0\) however, the uniqueness result does not seem to follow from the methods known before.

\section{Notation and definitions}

In this section, we introduce the basic notations and definitions.
The reader can find extensive background material for the subject in \textsection 8 of Joyce's book \cite{Jo00}. 

  In what follows, we shall always consider operators such as laplacian and intrinsic derivatives in terms of the reference smooth ALE K\"ahler metric; the same is the case for constants in the estimates whose dependence is not explicitly stated. Recall that an an asymptotically locally euclidean, abbreviated to \emph{ALE}, is a riemannian manifold  that resembles \(\bbC^n/G\) at distant points. 

To make this idea more specific, let us fix a finite subgroup of \(G \subset SU(n)\) that acts freely on \(\bbC^n - \{0\}\).
Then, the euclidean metric \(h_0\) on \(\bbC^n\) descends to the quotient \(\bbC^n - \{ 0 \} / G\).
Let \(r\) be the euclidean distance on \(\bbC^n\).
Then, we have the following definition:

\begin{definition}
Let \((M^n, J, g)\), which henceforth  we shall denote by \(M\) for the sake of brevity, be a non-compact K\"ahler manifold of dimension \(n\).
We say that \(M^n\) is \emph{asymptotically locally euclidean} asymptotic to \(\bbC^n / G\) provided that there exists a compact set \(S \subset \subset M\) and a map \(M - S \stackrel{\pi^{-1}}{\to} \bbC^n / G\) that is a diffeomorphism between \(X - S\) and the set \( \bbC^n - B_0(R) \) for some fixed \(R\).
We require the metric \(g\) to satisfy 
\[
 \nabla^k (\pi_*(g) - h_0 ) = O(r^{-2n - k})  \textrm{  for  } k \geq 0
\]
wherein \(\nabla\) is the Levi-Civita connexion associated to the flat metric \(h_0\).

\end{definition}

Along the same lines, in order to parametrise the space of metrics, let us introduce the space of ALE K\"ahler potentials. 
Unlike the case of compact manifolds, 
there is no ambiguity of adding a constant and to each K\"ahler metric in the K\"ahler class there corresponds only one potential.
We have the following definition:

\begin{definition}
\label{ale-def-2}
For a given asymptotically locally euclidean K\"ahler manifold \((M, \om, J)\) we define the space of ALE K\"ahler potentials to be as follows:

\[
 	\calH_{ALE} := \{ \phi \in C^\infty |   \om + dd^c \phi > 0, \nabla^k \phi = O(r^{2 - 2n - k}) , 0 \leq k \leq 2 \}
\]

Also, we can define a weaker space to which we may refer as the \emph{zero-th order} ALE K\"ahler potentials:

\[
	\tilde{\calH}_{ALE} =  \{ \phi |   \om + dd^c \phi \geq 0, \phi = O(r^{2-2n}),  
\vert \De \phi \vert \leq C \}
\]

\end{definition}

In particular, elements of \(\tilde{\calH}_{ALE}\) give rise to bounded metrics.

In the rest of this note, we shall refer to the laplacian operators of the metrics \(\om\) and \(\om_\phi\) on each time slice by \(\De\) and \(\De_\phi\).
In order to denote the laplacian on the total space \(M \times \Si\) with respect to the K\"ahler forms \(\Om\) and \(\Om_\phi\) we shall use \(\tilde{\De}\) and \(\tilde{\De}_\phi\).


We also define the following weighted version of H\"older spaces. 
For some negative real number \(\be\), let \(\Vert f\Vert_{C^k_\be}\) be defined as
\begin{eqnarray}
	\Vert f \Vert_{C^k_\be}:= \sum_{j=1}^k \sup_M \left \vert r^{j-\be} \nabla^j f\right \vert \nonumber
\end{eqnarray}
Let \(\de\) be the injectivity radius of the metric \(\om_0\), and let \(d(x,y)\) denote the distance between \(x\) and \(y\) with respect to \(\om_0\). 
Since the definition is supposed to take farther points into account, 
one may as well think of the euclidean distance pushed forward  via the chart \(\pi: \bbC^n - B(0,R) \to M - K\).
Also, let the semi-norm  \([.]_{\al, \ga}\) be defined as follows:
\begin{eqnarray}
	\left [ f \right]_{\al, \ga} \sup_{\stackrel{x \neq y}{d(x,y) < \delta}} \left( \left( (r(x) \vee r(y) \right)^{-\ga} { \vert f(x) - f(y) \vert \over d(x,y)^\al} \right)
\end{eqnarray}

wherein \(\vee\) denotes the minimum of two numbers. 
The definitions extend from functions to tensors in the obvious way. 
We then define the space \(C^k_\be\) to consist of functions that have finite \(\Vert . \Vert_{C^{k,\al}_\be} \)-norm defined as follows:
\begin{eqnarray}
	\Vert f \Vert_{C^{k,\al}_\be} := \Vert f \Vert_{C^k_\be} + \left[ \nabla^k f \right]_{\al, \be-k-\al}
\end{eqnarray}

It is probably the appropriate juncture to clarify the meaning of two key notions we shall use in this context: the `first Chern class' and its sign.
In general, notions such as Chern classes do not directly carry over from the framework of compact manifold without boundary to the non-compact case.
Heuristically speaking, we want a notion of the first Chern class that is compatible with the trivial topology of the ALE manifolds outside of some compact set \(K \subset \subset M\).
Therefore, we define an \emph{admissible} hermitian metric \(h\) on \(-K_M\) as follows. In order to make sense of the asymptotic flatness of the hermitian metric \(h\) on the anti-canonical bundle, \(-K_M\), let us use the coordinate system \(\pi\) on \(M - K\).
This trivialisation induces a metric on the bundle \(-K_M\) over \(M - K\), 
the flat metric on \(-K_{\bbC^n - B(0,R)}\), which we call \(h_0\).
We can extend \(h_0\) to the entire manifold \(M\) in a smooth way.
We know that that for any other hermitian metric \(h\) on \(-K_M\) we have \({h \over h_0} = f\) for some function \(f\).
It is therefore enough to demand that \(f\) decays at a certain rate. 
Namely, we can now define:

  be a metric whose curvature, \(\rho_h\), satisfies the decay property
\begin{eqnarray}
	\label{line-bundle-decay}
 	\nabla^k f = O(r^{-2m -k})
\end{eqnarray}

In particular, 
\[
	\vert \rho_h \vert = O(r^{-2m -2}),
\]

We have chosen this decay rate since it admits with the decay rate of the Ricci curvature of ALE metrics, and when we impose such decay rates for the curvature of a line bundle, 
loosely speaking, 
we make the manifold behave like a compact manifold with preassigned behaviour close to the boundary.
In particular, the notion of positivity and negativity for a line bundle can be carried over from the compact case and such notion stays well-define.
More precisely:
\begin{definition}
\label{c-1-bundle}
Let \(L\) be a line bundle over \(M\), an ALE K\"ahler manifold.
We say that \(L\) is a negative (respectively positive) line bundle provided that
there exists an hermitian metric \(h\) on \(L\), which satisfies the decay condition
\eqref{line-bundle-decay}
and further, its curvature form \(\rho_h\) is everywhere a non-positive  (respectively non-negative) (1,1)-form and negative (respectively positive) at some point.
The notion of zero \(c_1\) can be also extended in the same manner.

An ALE K\"ahler  manifold \(M\) is said  to be of negative  positive first Chern class provided that its anti-canonical bundle,  \(-K_M\), is negative or positive respectively.

\end{definition}

We now show that such a notion of \emph{sign} for a line bundle is well-defined.
Let \(\eta_1, \eta_2\) be two closed cohomologous forms with decay rates as in
\eqref{line-bundle-decay},
such that \(\eta_2 \geq 0\) whereas \(\eta_1 < 0\).
Since \(\eta_1\) and \(\eta_2\) are required to satisfy the decay conditions and \([\eta_1 - \eta_2]=0\),
the weighted \(dd^c\)-lemma, Theorem 8.4.4 in \cite{Jo00}, then states that  \(\eta_2 = \eta_1 + dd^c v\) where \(v \in C^{2,\al}_{\be+2}\).
 Gaffney's extension of Stokes's theorem allows us to  integrate by parts and thus observe  that \(\int_M dd^c v \wge \om^{n-1} = 0\). 
 We obtain therefore that
\[
0 \leq \int_M \eta_2 \wge \om^{n-1}= \int_M (\eta_1 + dd^c v) \wge \om^{n-1} 
= \int_M \eta_1 \wge \om^{n-1} 
< 0
\]

which is a contradiction.\\

\section{Classical solution of the equation on the the product of the manifold and the compact Riemann surface with positive right hand side}
\label{section-3}
In this section, we consider the complex Monge-Amp\`ere equation on the product of the asymptotically locally euclidean manifold \(M\) and the cylinder \(\Si\) -viewed as a Riemann surface. 
An appropriately chosen sequence of such solutions will then be used to construct a weak solution to the degenerate equation. 
But the classical solution is important in its own right as we shall see in
\textsection \ref{mabuchi}
as the \(\eps\)-geodesics are our tool in proving Theorems \ref{thm-1-2} and \ref{uniqueness}.
The complex Monge-Amp\`ere equation was solved in \cite{Jo00} on ALE manifolds without boundary, but in our case, the presence of the boundary requires a different treatment.

In order to solve the equation with the right hand side \(f\) asymptotically equal to  a constant, we shall take a sequence of compact domains that expand to the strip. 
In order to prove the existence of classical solutions on the strip, we shall  establish uniform estimates up to order \(C^{2,\ga}\) on compact sets.
We  will prove uniform laplacian and \(L^\infty\) bounds for such solutions.
Existence of the laplacian bounds leads to the uniform ellipticity of the linearised operator which will be used in deriving the estimates for the degenerate case.

\begin{theorem}
Consider the boundary value problem 

\begin{eqnarray}
\label{ma-f-1}
	\begin{cases}
		\Om_\phi^m = e^f \Om^m \text{ ; } M \times \Si \\
		\phi = \psi  \textrm{  ; } \del \left( M \times \Si \right) 
	\end{cases}
\end{eqnarray}
wherein \(M\) and \(\Si\) are an ALE K\"ahler manifold and the cylinder respectively, and \(f \in C^3({M \times \Si})\) satisfies \(f =c \), for some positive number \(c\), outside of some set of the form \(K \times \Si\), where \(K \subset \subset M\).
Then, this problem has a unique solution in \( C^{2, \ga}(M \times \overline{\Si}) \) for some \(\ga\).

\end{theorem}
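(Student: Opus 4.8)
The plan is to solve \eqref{ma-f-1} by the continuity method combined with an exhaustion of the non-compact base \(M\) by compact manifolds with boundary, following the classical strategy of Yau and Caffarelli--Kohn--Nirenberg--Spruck adapted to the product \(M \times \Si\). First I would fix an exhaustion \(K \subset \subset \Om_1 \subset \subset \Om_2 \subset \subset \cdots\) of \(M\) by smooth relatively compact domains with \(\bigcup_j \Om_j = M\), chosen so that \(\del \Om_j \subset M - K\) lies in the ALE end. On each compact manifold with corners \(\Om_j \times \Si\) I would solve the Dirichlet problem for \(\Om_\phi^m = e^f \Om^m\) with boundary data \(\phi = \psi\) on the parabolic-type boundary \(\del(\Om_j \times \Si) = (\del\Om_j \times \Si) \cup (\Om_j \times \del\Si)\); here \(\psi\) is taken to be an \(\Om\)-plurisubharmonic extension of the given boundary potentials that is already harmonic (indeed equal to \(c\)-related explicit data) on the end, so that it is an admissible subsolution on the corner pieces. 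Since \(f = c\) outside \(K \times \Si\) and \(e^c\) is just a volume normalisation, the equation is genuinely non-degenerate and the base domain compact, so the Caffarelli--Kohn--Nirenberg--Spruck theory (or Guan's treatment of the degenerate-corner case) gives a unique solution \(\phi_j \in C^{2,\ga}(\Om_j \times \overline{\Si})\).

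The heart of the matter is then to extract uniform \emph{a priori} estimates for \(\phi_j\) that are independent of \(j\), so that a diagonal/Arzel\`a--Ascoli argument produces a limit \(\phi \in C^{2,\ga}_{\mathrm{loc}}(M \times \overline{\Si})\) solving the equation on all of \(M \times \Si\). The \(C^0\) bound I would get from the maximum principle together with barrier functions built from the ALE structure: comparing \(\phi_j\) against \(\psi \pm \text{(bounded \(\Om\)-psh function)}\), using that on the end the equation forces \(\phi_j\) to be trapped between explicit radial barriers with the \(O(r^{2-2n})\) decay. The second-order estimate follows the standard Aubin--Yau computation: applying \(\tilde{\De}_\phi\) to \(\log \tr_\Om \Om_\phi\) and using the lower curvature bound on \(\om\) (and the flatness of the \(\Si\)-direction) yields \(\tilde{\De}_\phi(\log \tr_\Om \Om_\phi - A\phi) \geq \tr_\Om\Om_\phi - C\), so the maximum principle on \(\Om_j \times \Si\) bounds \(\tr_\Om \Om_\phi\) in terms of its boundary values and the \(C^0\) bound --- and the boundary values are controlled since near \(\del\Om_j\) the solution is pinned by barriers. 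The boundary second-order estimate on \(\Om_j \times \del\Si\) is where one uses that \(\psi\) restricted to the \(\Si\)-boundary is the given \(\calH_{ALE}\) data and the construction of tangential/normal barriers in the \(\Si\) variable, which is exactly the compact-case computation of He \cite{He12} carried along the ALE base. Once \(\tilde\De\phi\) is bounded the equation is uniformly elliptic, and the complex version of Evans--Krylov plus Schauder bootstrapping gives uniform \(C^{2,\ga}\) on compact subsets.

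The main obstacle I anticipate is ensuring the estimates are genuinely uniform in the exhaustion parameter \(j\) out to the ALE end --- i.e.\ that the barriers and the constant in the \(\tr_\Om \Om_\phi\) bound do not degenerate as \(\del\Om_j \to \infty\). This is precisely why the hypothesis \(f \equiv c\) outside \(K \times \Si\) is imposed: on the end the equation is, after the trivial rescaling by \(e^c\), the \emph{homogeneous} Monge--Amp\`ere equation \(\Om_\phi^m = e^c\Om^m\), for which the ALE reference data provide a solution, and one can use the weighted \(dd^c\)-lemma / weighted Schauder theory from \textsection 8 of \cite{Jo00} to produce genuinely global sub- and super-solutions with the correct \(O(r^{2-2n})\) decay. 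I would therefore spend the bulk of the proof constructing these global barriers explicitly from the radial coordinate \(r\) on the end, checking their \(\Om\)-plurisubharmonicity and the Monge--Amp\`ere inequalities they satisfy, and then invoking the comparison principle uniformly on every \(\Om_j \times \Si\). Uniqueness of the solution in \(C^{2,\ga}(M \times \overline\Si)\) is then immediate from the comparison principle for the complex Monge--Amp\`ere operator applied to the difference of two solutions, which attains no interior positive maximum and agrees on the boundary, combined with the decay at infinity that forces the difference to zero there as well.
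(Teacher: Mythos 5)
Your proposal follows essentially the same route as the paper: exhaust the strip by compact domains built over metric balls in the ALE base, solve each Dirichlet problem via the Caffarelli--Kohn--Nirenberg--Spruck/Guan theory, derive $L^\infty$ and Laplacian bounds uniform in the exhaustion parameter (barriers with the $O(r^{2-2n})$ decay and the Aubin-type maximum principle argument depending only on the lower bisectional curvature bound, plus the Chen--Guan boundary estimate), and then pass to the limit with Evans--Krylov and Arzel\`a--Ascoli. The details you flag (uniformity of the barriers on the end, boundary second-order estimates) are exactly the points the paper addresses, so your outline is a faithful match.
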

We observe that the estimates we derive for the compact domains are independent of their size and are therefore uniform.

\textit{Proof of Theorem \ref{ma-f-1}}
As mentioned before, we solve the equation on a sequence of compact domains that grow larger and cover the entire strip. Note that the right hand side is kept constant in the process. The boundary condition has be chosen appropriately as we  shall explain below.
We shall first the detail the technical points that need to be taken into account in the construction of such domain below.
Afterwards, we prove the existence of uniform \(L^\infty\), and laplacian bounds independent of the size of the sets in this family.
In order to make the proof easier to follow, the details of proofs of \emph{a priori} estimates are postponed to  Propositions \ref{l-infty-f-1}.
Having proved the laplacian estimates, one obtains \(C^{2,\ga}\) uniform estimates via the extension of the Evans-Krylov theory to complex hessian equations as is done in \cite{Siu87} for the interior estimates.
The boundary \(C^{2,\ga}\) estimates follow from 
the boundary estimates in the proof Theorem 1
  in \cite{CKN}.
Note that since we obtain a uniform bound on the laplacian,
the equation becomes uniformly elliptic with uniformly bounded complex hessian.
This means that the exponent \(\ga\) and the \(C^{2,\ga}\) norm are uniformly bounded from above on the entire domain \(M \times \Si\).

The construction of the compact domains converging to the strip is as follows.
Let \(B_T\) be the metric ball with respect to the metric \(\om\) on the manifold \(M\). 
Set \(\calG_T \subset M \times \Si \) be the domain obtained by smoothing the corners of the
 region \(B_T \times \Si\).
Let \(\psi_T\) be the function obtained by restricting the function \(\psi\),
constructed in \textsection 
\ref{l-infty},
to \(\calG_T\). \\
 
We solve the problem on each \(\calG_j\) for \(j \in \bbN\) along with uniform estimates up to \(C^{2,\ga}\), i.e
for each \(\calG_j\) we solve 
\begin{eqnarray}
\label{eq-3-2}
	\begin{cases}
		\Om_{\phi_T}^m = e^f \Om^m \textrm{ ; } \calG_T \\
		\phi_T = \psi_T \textrm{  ;  } \del \calG_T
	\end{cases}
\end{eqnarray}

Since the sets \(\calG_j\) exhaust the strip, by uniform continuity on the compact sets and by the usual diagonal argument one may obtain a solution saisfying the same \(C^{2,\ga}\) estimates on the entire strip. \\
We then have to guarantee that these estimates remain valid as \(T \to \infty\).


\begin{proposition}
\label{l-infty-f-1}
In the family of boundary value problems  
\ref{eq-3-2},
we have that for all domains \(\calG_T\), defined in the proof of Theorem \ref{ma-f-1},
the quantities \(\Vert \phi \Vert_{L^\infty}\), \(\Vert \nabla \phi \Vert_{L^\infty} \), and \(\Vert \De \phi \Vert_{L^\infty}\)  are bounded independent of \(T\).
\end{proposition}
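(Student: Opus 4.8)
The plan is to establish the three bounds separately, in the order $L^\infty$, then $\De\phi$, then $\nabla\phi$, since the gradient bound in the interior can be interpolated once the first two are in hand, while the boundary gradient bound needs barriers. For the $L^\infty$ estimate I would use the maximum principle together with explicitly constructed barrier functions. Since $\Om_{\phi_T}^m = e^f\Om^m \geq \Om^m$ pointwise (as $f\geq 0$ if $c>0$; more carefully $f$ is bounded so $e^f$ is bounded above and below by positive constants), $\phi_T$ is $\Om$-plurisubharmonic and comparison against the boundary data $\psi_T$ gives an upper bound $\phi_T \leq \sup_{\del\calG_T}\psi_T$, which is controlled by $\psi$ uniformly in $T$ because $\psi \in \calH_{ALE}$ (or the zero-th order space) decays like $O(r^{2-2n})$. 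For the lower bound I would take the function $\psi$ itself, suitably modified: since $\psi$ has bounded Laplacian and the right-hand side is uniformly bounded, a comparison of Monge--Amp\`ere operators $\Om_{\psi + A\Theta}^m$ against $e^f\Om^m$, where $\Theta$ is a fixed negative-definite exhaustion-type function decaying at the correct ALE rate and $A$ a constant independent of $T$, yields $\phi_T \geq \psi + A\Theta - C$ on $\calG_T$; the point is that $\Theta$ and $A$ depend only on the end-point data and the curvature lower bound of $\om$, not on $T$. This is exactly the kind of argument detailed in \textsection\ref{l-infty}.

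For the Laplacian bound I would run the Yau/Aubin second-order estimate adapted to the product setting on $M\times\Si$. Applying $\tilde\De_\phi$ to the quantity $\log\tr_\Om\Om_\phi - \kappa\phi$ for a suitable constant $\kappa$ chosen in terms of a lower bound for the bisectional curvature of $\Om=\pi^*\om$, the Bochner-type inequality gives $\tilde\De_\phi(\log\tr_\Om\Om_\phi - \kappa\phi) \geq (\text{something positive})\cdot\tr_{\Om_\phi}\Om - C$, and since $f$ and its first two derivatives are uniformly bounded (it equals $c$ outside $K\times\Si$ and lies in $C^3$), the usual manipulation bounds $\tr_\Om\Om_\phi$ at an interior maximum. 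The subtlety is that $\calG_T$ has boundary, so the maximum of $\log\tr_\Om\Om_\phi - \kappa\phi$ may occur on $\del\calG_T$; there I would invoke the boundary Laplacian estimate, which follows from the construction of $\psi$ as a subsolution near $\del(M\times\Si)$ together with the tangential/normal decomposition as in the boundary estimates of \cite{CKN}. Both the interior constant and the boundary constant are manifestly independent of $T$ because the geometry of $\om$ and the data $\psi$ are fixed while $T$ only enlarges the domain.

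For the gradient bound, in the interior I would use the Laplacian bound just obtained: once $\De\phi$ is bounded and $\phi$ is bounded, interior elliptic estimates (or a direct Bernstein-type argument applied to $|\nabla\phi|^2$ with the auxiliary term $-\kappa\phi$, using the bound on $\tr_{\Om_\phi}\Om$ that the Laplacian estimate provides) give $|\nabla\phi|$ controlled by $\sup|\phi|$ and $\sup|\De\phi|$. Near $\del\calG_T$ the normal derivative is controlled by sandwiching $\phi_T$ between the upper and lower barriers constructed for the $L^\infty$ estimate, which agree with $\psi_T$ on the boundary, so their normal derivatives at $\del\calG_T$ bound that of $\phi_T$; the tangential derivative is just the derivative of $\psi_T$. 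All constants again depend only on $\psi$, $\om$ and its curvature lower bound.

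The main obstacle I expect is the \emph{uniformity in $T$} of the boundary estimates — in particular the boundary Laplacian estimate — on the smoothed corner domains $\calG_T$: one must check that smoothing the corners of $B_T\times\Si$ does not introduce curvature of $\del\calG_T$ or barrier constants that blow up with $T$, and that the subsolution $\psi$ built in \textsection\ref{l-infty} genuinely works near the \emph{entire} boundary $\del(M\times\Si)$ (including at large $r$) with a fixed sub-solution defect. Once the ALE decay of $\psi$ and the product structure are used to see that the relevant geometry stabilises as $r\to\infty$, the corner-smoothing can be done at a fixed scale independent of $T$, and the estimates go through uniformly.
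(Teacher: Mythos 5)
Your proposal is correct and follows essentially the same route as the paper: maximum principle/time-convexity for the upper bound, an explicit subsolution barrier built from the endpoint data (the paper's \( Ct^2 + t\phi_0 + (1-t)\phi_1 \); your \(\psi + A\Theta\) variant plays the same role) for the lower bound, the Aubin--Yau second-order maximum-principle estimate reduced to a boundary Laplacian estimate of Guan--Chen--CKN type, with uniformity in \(T\) coming from the fixed geometry of \(\om\) and the fixed boundary data. The only notable difference is that you spell out the gradient bound (boundary control by barriers plus a Bernstein-type interior argument), a point the paper's proof states in the proposition but does not explicitly address.
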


\begin{proof}
In order to prove the upper bounds, notice that owing to the fact the boundary data are extended trivially along the \(S^1\)-factor, the solution is indeed convexi in the time direction. 
The convexity in the temporal direction we have that the upper bound in the interior does not exceed that of the boundary.
As one may check, for sufficiently large \(C\), the function \(\psi\), which indeed agrees with the solution on the boundary of the domain \(\calG_T\), clearly serves as a sub-solution and hence provides a lower bound.

In order to prove the laplacian estimates we follow the calculation of Aubin as done in \cite{Siu87}, \textsection 3 of Chapter 2.
Namely, in the normal coordinates at any point one has:

\begin{equation}
	\sum_{j=1}^{n+1} {1 \over 1 + \phi_{j \bj}} - B \leq \tilde{\De}_\phi \left( (n + 1 + \tilde{\De} \phi) + (B + 1) \phi  \right)
\end{equation}

wherein \(B\) is a constant. In the inequality above, all the operators act on both time and space directions. One can now follow the standard line of argument: either the quantity 
\((n + 1 + \tilde{\De} \phi) + (B + 1) \phi\) attains its maximum in the interior, in which case we have an upper bound on \(\sum_j {1 \over 1 + \phi_{j\bj }}\) and thereby on \(\tilde{\De} \phi\), or its maximum is attained on the boundary.
Since we have already found a uniform \(L^\infty\) bound,
finding an estimate on the boundary for the laplacian establishes a uniform estimate.


It is essential to note that the constant \(C\) on the right hand side depends only on the curvature properties of the underlying manifold \(M \time \Si\).
In fact, what is needed is a
lower bound of the bisectional curvature, \(\inf_{\calG_T, \al, \be} R_{\al \bal \be \bbe}\), which is, since \(M\) is asymptotically locally euclidean and \(\Si\) is flat,
bounded independent of \(T\).

In order to prove the boundedness of the quantity \(\tilde{\De} \phi_T\) at the boundary points we follow Chen's  approach  in \cite{Ch00}, 
which in turn was inspired by a previous work of B. Guan on the Dirichlet problem for the complex Monge-Amp\`ere equation \cite{Gu98}. 
Thanks to the behaviour of the boundary conditions,
the boundary estimate for the laplacian remains valid independent of \(T\).
Hence, the laplacian estimates remain valid independent of \(T\). 

Further, we know that for a fixed function \(f\) on the strip,
boundedness of laplacian leads to strict ellipticity of the operator, which, in turn, combined
with a version of the Evans-Krylov theory adapted to the operators of the complex hessian, one obtains \(C^{2,\ga}\) bounds, see \textsection 4 \cite{Siu87}.
Observe that the exponent \(\ga\) and the norm \(\Vert \phi \Vert_{C^{2,\ga}(\calG_T)} \) only depend on the \(L^\infty\) and the laplacian estimates, and are, therefore, uniform for all \(T\).
This finishes the proof of the existence of classical solutions for the boundary value problem 
\ref{ma-f-1} 
on the strip, with globally bounded laplacian.

\end{proof}


Let us turn to proving the \(L^\infty\) bounds. The upper bounds are obtained in this case as in 
\ref{l-infty-hom}.
If the data is not necessarily invariant in the \(S^1\) direction,
one can notice that the function is indeed sub-harmonic in the time direction and its maximum therefore appears on the boundary.
For the lower bound, consider the function:

\begin{equation}
\label{l-infty}
	\tilde{\psi} = C t ^ 2 + t \phi_0 + (1-t) \phi_1
\end{equation}

One may verify that this function is a lower barrier for the solutions once the constant \(C\) is chosen to be appropriately large.

As for the laplacian bounds, we observe that the arguments for proving the laplacian estimates based on maximum principle are only depend on a lower bound of the curvature of the reference metric \(\om\), 
which on an ALE space are bounded, and on the \(L^\infty\) estimates.
In particular, as the domains expand, the estimates are not affected once we can prove uniform estimates on the boundary.
 Since this is very similar to 

\subsection{Higher regularity of \(\eps\)-solutions}
As we shall see later in the section on the uniqueness of metrics of constant scalar curvature, 
for any positive \(\eps\) on the right hand side, 
we shall need suitable asymptotics for the curvature that will allow us to integrate by parts the terms that involve the higher derivatives of  the \(\eps\)-\emph{approximate geodesics}.
We therefore make the following assertion concerning  space derivatives of solutions.

\begin{proposition}
\label{higher-derivatives}
In the boundary value problem 
\eqref{ma-f-1},
where \(f>0\) is equal to \(\eps\) outside of a compact set,
for any \(\eps>0\) we have:

\begin{eqnarray}
\label{3-3}
	\vert \nabla_{\al} \nabla_\be \nabla_ \ga \phi \vert &\leq& C r^{-2m - 1}\\ \nonumber
	\vert \nabla_{\al} \nabla_\be \nabla_ \ga \nabla_\tta \phi \vert &\leq& C r^{-2m-2}
	  \textrm{  } \al, \be, \ga, \tta \in \{1,..., m, \bar{1}, ... \bar{m} \} \nonumber
\end{eqnarray}

wherein \(C\) depends on \(\eps > 0\).
As a consequence, 

\begin{eqnarray}
\label{3-4}
	\vert Rm(\om_\phi) \vert, \vert Rc(\om_\phi) \vert, \vert K(\om_\phi) \vert \leq C r^{- 2m-2 } 
\end{eqnarray}

wherein \(Rm(\om_\phi), Rc(\om_\phi)\) and \(K(\om_\phi)\) are the curvature tensor, the Ricci tensor, and the scalar curvature.
About the gradient of the volume form ration we have:

\begin{equation}
	\label{coro-higher-derivatives}
	\left \vert \nabla \left({\om_\phi^n \over \om^n} \right) \right \vert \leq C r^{-2m -1 } 
\end{equation}

Further, we have that 
\begin{eqnarray}
\label{3-5}
\vert \nabla _\al \nabla_\be \phi' \vert \leq C r^{-2m +1}
\end{eqnarray}

\end{proposition}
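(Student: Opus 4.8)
\textbf{Proof proposal for Proposition \ref{higher-derivatives}.}

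The plan is to bootstrap the known second-order estimates (the uniform $L^\infty$, gradient, and Laplacian bounds from Proposition \ref{l-infty-f-1}) into decay estimates on the third and fourth spatial derivatives, and then read off the curvature and volume-ratio consequences by algebra. First I would differentiate the equation $\Om_\phi^m = e^f \Om^m$ in the form $\log\det(g_{\al\bbe} + \phi_{\al\bbe}) = f + \log\det g_{\al\bbe}$. Applying $\nabla_\ga$ gives a second-order elliptic equation for $\phi_\ga$, namely $g_\phi^{\al\bbe}\nabla_\bbe\nabla_\al \phi_\ga = \nabla_\ga f + (\text{terms involving } Rm(\om),\ \nabla\phi,\ \nabla^2\phi)$; since $f \equiv \eps$ outside a compact set and the background metric is ALE with curvature decaying like $O(r^{-2m-2})$, the right-hand side lies in a weighted space. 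The operator $g_\phi^{\al\bbe}\nabla_\bbe\nabla_\al$ is uniformly elliptic thanks to the two-sided bound on $g_\phi$ coming from the uniform Laplacian estimate, and it is asymptotic to the flat Laplacian. So I would invoke the weighted Schauder/elliptic theory on ALE spaces (the mapping properties of the Laplacian between weighted Hölder spaces $C^{k,\al}_\be$, as in Joyce \cite{Jo00}, §8) together with the boundary conditions — which are themselves in $\calH_{ALE}$, hence decay like $r^{2-2m}$ — to conclude $\phi \in C^{3,\al}_{2-2m}$, i.e. $|\nabla^3\phi| \le C r^{-2m-1}$. Differentiating once more and repeating the argument (the linearized equation for $\nabla_\tta\nabla_\ga\phi$ now has right-hand side in the next weighted space, using the third-derivative bound just obtained) yields $|\nabla^4\phi| \le C r^{-2m-2}$.

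Once the third and fourth derivative bounds hold, estimates \eqref{3-4} and \eqref{coro-higher-derivatives} are immediate: $Rm(\om_\phi)$ is built from $g_\phi$, $g_\phi^{-1}$, and $\nabla^2 g_\phi = \nabla^4\phi + (\text{lower order})$, all of which are now controlled, and the worst-decaying contribution is the $\nabla^4\phi$ term of size $r^{-2m-2}$, with the background curvature contributing at the same rate; the Ricci and scalar curvatures are contractions of $Rm(\om_\phi)$ and obey the same bound. For $\nabla\big(\om_\phi^n/\om^n\big)$ one differentiates the ratio $\det(g+\phi_{\bullet\bullet})/\det g$, producing terms with one factor of $\nabla^3\phi$ or of $\nabla g$ against bounded factors, giving the $r^{-2m-1}$ rate. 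For the last estimate \eqref{3-5} on $\phi'$ (the time derivative), I would differentiate the equation in $t$: because the equation is $\Om_\phi^m = e^f\Om^m$ with $f$ independent of $t$ in the relevant sense, $\phi'$ satisfies a homogeneous linear elliptic equation $g_\phi^{\al\bbe}\nabla_\bbe\nabla_\al\phi' = 0$ on each time slice (this is the standard linearization — the time derivative of $\log\det$ is the trace of $g_\phi^{-1}$ against $\phi'_{\bullet\bullet}$), with boundary data $\phi_0,\phi_1 \in \calH_{ALE}$ decaying like $r^{2-2m}$. The weighted maximum principle / weighted elliptic estimates then give $\phi' = O(r^{2-2m})$ and, differentiating the homogeneous equation twice more and bootstrapping as above, $|\nabla_\al\nabla_\be\phi'| \le C r^{-2m+1}$.

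The main obstacle I anticipate is making the weighted elliptic theory genuinely apply: the background weighted Hölder estimates are stated for the flat Laplacian (or a fixed ALE metric), whereas here the operator is $g_\phi^{\al\bbe}\nabla_\bbe\nabla_\al$ with $g_\phi$ only known a priori to be uniformly equivalent to $\om$ and with $\nabla^2 g_\phi$ not yet controlled — so there is a genuine circularity to break. The way around it is a standard but careful continuity/bootstrap argument: one first notes $g_\phi - g = \phi_{\al\bbe}$ tends to $0$ at infinity (from the $C^{2}_{2-2m}$ control of the boundary data propagated inward by the Laplacian bound and interpolation), so $g_\phi$ is asymptotic to $g$, hence to the flat metric; the difference operator $g_\phi^{\al\bbe}\nabla_\bbe\nabla_\al - \Delta_{h_0}$ has coefficients decaying at infinity, so it can be absorbed as a lower-order perturbation in the weighted estimates on a sufficiently large exterior region, with the compact core handled by interior Schauder theory. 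A secondary technical point is keeping track of the commutators between $\nabla_\ga$ and the elliptic operator, which produce curvature terms of the background — but these decay at least as fast as $r^{-2m-2}$, faster than the target weights, so they are harmless. I would also need the estimates to be uniform in $\eps$-dependence only in the weak sense stated (the constant $C$ is allowed to depend on $\eps$ here), so no delicate uniformity is required at this stage.
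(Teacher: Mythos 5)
Your overall strategy --- differentiate the Monge--Amp\`ere equation and bootstrap with Schauder-type estimates --- is the same as the paper's, but the two places where you diverge are precisely where your argument has gaps. First, to run the global weighted elliptic theory you need the operator \(g_\phi^{i\bj}\nabla_i\nabla_{\bj}\) to be a decaying perturbation of the background Laplacian, i.e. you need \(\phi_{i\bj}\to 0\) at infinity at some rate. You flag this circularity yourself, but your proposed fix (``propagated inward by the Laplacian bound and interpolation'') is not a proof: the uniform Laplacian bound gives no decay of second derivatives, and nothing established before this proposition does. The paper sidesteps the issue: outside a compact set the differentiated equation is simply \(\tilde{\De}_\phi(\nabla_\xi\phi)=0\), and the decay \(\vert\nabla_\xi\phi\vert\leq Cr^{-2m+1}\) is obtained from the maximum principle with the explicit barrier \(r^{-\ka}(t-t^2)+r^{-2m+1}\), which needs only the uniform ellipticity of \(\tilde{\De}_\phi\) (coming from the Laplacian bound together with the lower bound \(e^f\geq\eps\); this is where \(C\) depends on \(\eps\)). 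The upgrade to \eqref{3-3} is then done by local Schauder estimates on the annular product domains \(\Om_R\subset\Om'_R\), where no asymptotic flatness of the coefficients is required --- only uniform ellipticity and uniform \(C^\ga\) coefficient bounds from the Evans--Krylov step --- and the fourth-derivative bound comes from differentiating the equation once more, as in \eqref{fourth-derivative}. Note also that the weighted mapping properties you want to cite from Joyce are stated for the ALE manifold without boundary, whereas the problem here lives on \(M\times\Si\) with boundary at \(t=0,1\); the paper's local argument explicitly incorporates the boundary pieces \(T_{0,1}\).

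Second, your route to \eqref{3-5} rests on a false identity: differentiating \eqref{ma-f-1} in \(t\) gives \(\tilde{\De}_\phi\phi'=\del_t f\), with the full Laplacian of \(\Om_\phi\) on \(M\times\Si\) (the trace in the linearisation runs over all \(n+1\) indices), not a slice-wise equation \(g_\phi^{\al\bbe}\nabla_\al\nabla_{\bbe}\phi'=0\) on each time slice; moreover the boundary values of \(\phi'\) at \(t=0,1\) are not \(\phi_0,\phi_1\) --- they are not prescribed at all, so the barrier data you invoke are not available. In the paper \eqref{3-5} requires no separate argument: the Schauder estimate is applied to \(\nabla_\xi\phi\) on the product annuli, so its \(C^{2,\al}\) bound already controls every third derivative with at least one spatial index, in particular \(\nabla_\al\nabla_\be\phi'\). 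Your reductions of \eqref{3-4} and \eqref{coro-higher-derivatives} to the derivative bounds \eqref{3-3} are fine and agree with the paper.
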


\begin{proof}
The proof is an application of Schuader estimates to the space derivatives and a type of boot-strap argument.
We can apply the Schauder estimates since we already know membership in \(C^{2,\al}\) of the potential for the \(\eps\)-solution.

We shall first derive estimates for the first space derivative.
Since at sufficiently far points  the right hand side is constant, 
at those points we obtain the following by differentiating the equation

\begin{eqnarray}
\label{de-phi-xi}
	\De_\phi \nabla_\xi \phi = 0
\end{eqnarray}

for some unit spatial direction \(\xi\).
We shall now use the fact that the quantity \(\nabla_\xi \phi\) using an appropriate barrier.
Since the boundary conditions for \(\nabla_\xi \phi\) decays like \(r^{-2m + 1}\),
one can easily verify that a function
 \(v:= r^{-\ka}(t - t^2) + r^{-2m + 1}\) is an upper barrier when \(\ka < 2m + 1 \). This in particular means that any space derivative decays at least at the rate of \(r^{-2m + 1}\), which is the same as the decay rate of the boundary conditions.

Using the decay rate for the first derivatives, we now prove some decay estimate for the space derivatives up to the third order along with their H\"older semi-norms.
Consider now the domains 
\(\Om_R \subset \Om'_R \subset M \times \Si\) 
defined as follows. 
Define 
\(\Om_R := \{ (x,t)| R - 1 < \rho(r) < R + 1 \} \), and \(\Om'_R := \{ (x,t)| R-2 < \rho(x) < R+2 \} \).
The decay rate we have obtained guarantees that on the pieces of the boundary \(\{\rho(x) = R-2\} \) and \(\{ \rho(x) = R+2 \}\), the quantity \(\nabla_\xi \phi\) is bounded by \(C R^{-2m + 1}\). Owing to the decay rates of the boundary conditions on the other hand, on the two components \(T_{0,1}:=\del (\Om'_R) \cap \{t=0,1 \}\) we have \(\Vert \nabla_\xi \phi \Vert_{C^{2,\al}(T_{0,1})} \leq CR^{-2m + 1} \).
We conclude, by the  Schauder estimates, that we have on \(\Om'_R\):
\[
	\Vert \nabla_\xi \phi \Vert_{C^{2,\al}(\Om_R)} \leq CR^{-2m + 1}
\]

As a result, any of the third order derivatives that have at least one spatial direction belong to \(C^\al\) and their \(C^\al\) norm is bounded by \(CR^{-2m + 1}\). 
Particularly, this proves \eqref{3-5}.

We can now differentiate \eqref{de-phi-xi} in a unit spatial direction 
\(\xi'\) to obtain:

\begin{equation}
\label{fourth-derivative}
\De_\phi \left( \nabla _{\xi'} \nabla_ \xi \phi \right) 
=
 g^{i  \bar{n}}_\phi g^{m \bj}_\phi \phi_{;m \bar{n} \xi'} \phi_{;\xi i \bj}
\end{equation}
wherein the Latin indices vary over both time and space coordinates.
Thanks to the decay estimates for the third derivatives  of the potential with at least one spatial direction, we observe that on domains \(\Om_R\) and \(\Om'_R\),
the right hand side of \eqref{fourth-derivative} satisfies:

\[
\Vert  g^{\al \bn}_\phi g^{\m \bbe}_\phi \phi_{\m \n \xi'} \phi_{\xi \al \bbe} \Vert_{C^\al(\Om'_R)} \leq CR^{-4m + 2}
\]

By using an argument similar to the poof of the \(C^{3,\al}\) estimate in the space direction, 
we have that 

\[
\Vert \phi \Vert_{C^{4,\al}} \leq C R^{-2m - 2}
\]

which finishes the proofs of \eqref{3-3} and \eqref{3-4}.

\end{proof}

\section{\emph{A priori} estimates for the geodesic equation}
\label{section-4}

As we have seen, the laplacian estimates are reduced to deriving the laplcian estimates at the boundary.
Further, the estimates of in \cite{Gu98} work in the case of degenerate right hand side as well and this was further used in \cite{Ch00}. 
The proof of Proposition 
\ref{l-infty-f-1}
therefore carry over to the case of degenerate right hand side.
We only prove the weighted \(L^\infty\) estimates.

\subsection{\(L^\infty\)-estimates}
\label{l-infty-hom}

Similar to the case of compact manifolds, we derive the \(L^\infty\) estimates. 
It will be enough to find a sub- and a super-solution in order to find upper and lower bounds on the function. \\
For the upper bound, notice that the function \(\phi\) is indeed convex in the time direction, namely \(\phi'' \geq 0\). 
Therefore, the upper bounds can only occur on the boundary points.
This, in particular, proves that the upper bound decays at the same rate as the boundary conditions.
As for the sub-solution,
one may consider any \(\Om\)-pluri-subharmonic function that restricts to the the boundary conditions. Consider the following function:
\[
	\psi(\fz, t) := t\phi_0 + (1-t) \phi_1 + C r^{3-4n} t^2
\]

wherein \(\ga\) is the exponent appearing in Theorem
\ref{ma-f-1}.
We show that if the constant \(C\) is chosen to be large enough, then one observes that \(\psi\) is indeed a sub-solution for the homogeneous problem.

To see this, we first see by direct calculation that 
\begin{eqnarray}
	\psi(\fz ,t)_{t\al} = O(r^{-2n + 1}) \\ \nonumber
	\psi(\fz ,t)_{t\bbe} = O(r^{-2n + 1}) \\ \nonumber
	\psi(\fz, t)_{tt} = O(r^{4n - 3}) \nonumber
\end{eqnarray}
 Using these rates of decay, by substituting these terms into the operator
 \( \left( \psi_{tt} - g^{\al \bbe}_\psi \psi_{t \al} \psi_{t \bbe} \right) {\om_\psi^n \over \om^n} \)
 we see that for sufficiently large \(C\), this expression is positive, whereby
 we conclude that \(\psi\) is a subsolution.



\section{\(\calK\)-energy, metrics of constant scalar curvature}
\label{mabuchi}
Although we shall not explicitly make use of it, one observes that
thanks to the asymptotic behaviour of the potentials and their derivatives we can define the Mabuchi \(L^2\)-metric on the space of potentials when \(n>2\).
What we need is will be the geodesics, the \(\eps\)-geodesics to be more precise, regardless of their relevance as
the extrema of the length functional.

In our calculation in the proof of the next theorem we shall need the following second order operator also sometimes referred to as the \emph{Lichnerowicz operator}.
It is the complementary part of the -real- hessian to the tensor \(dd^c u\). 
The operator \(\scrD\) 
applied to a real valued function \(u\), is defined in local coordinates as:

\begin{eqnarray}
	\scrD u:= \nabla_\al \nabla_\be u d\fz^\al \otimes d\fz^\be \nonumber
\end{eqnarray}

One important property of this operator that we shall use is that the if \(u\) lies in the kernel of \(\scrD\), then the vector field \[\uparrow \bar{\partial} u:= g^{\al \bbe}  {\del u \over \del \fz^{\bbe}} {\del \over \del \fz^\al}  \]
is holomorphic, cf. \textsection 1.22 in \cite{Gaud}.

In this section we shall always assume that the first Chern class, \(c_1(M)\), is non-positive.
As in the case of compact manifolds, we define the 
\(\calK\)-energy by its differential as follows:
  
\begin{equation}
\de_\psi \calK = - \int_M K_\phi  \psi \om_\phi^n  
\end{equation}

wherein \(K_\phi\) is the scalar curvature of the metric \(\om_\phi\).
Notice that by the asymptotics behaviour of the potentials, we know that the integral above is convergent.
In this section, we shall extend the proof given in \cite{Ch00} for the uniqueness of metrics of constant scalar curvature in each cohomology class of compact K\"ahler manifolds to the case ALE K\"ahler manifolds. It may be seen as a generalisation of the uniqueness theorem for ALE Ricci-flat K\"ahler metrics proved in \cite{Jo00}. 

We now turn our attention to the proof of the uniqueness assertion.
As we shall see, geodesics are not the only -owing to their lack of regularity, if they are at all- suitable curves for our purpose.
The geodesics nevertheless help us find the right curves for our purpose. 
As we shall see any curve in \(\calH\) with appropriately assigned, and not necessarily vanishing, acceleration, called \(\eps\)-geodesics in \cite{Ch00}, will do.

\begin{proof}[Proof of \ref{uniqueness}]

We follow Chen's  approach in \cite{Ch00}. 
Let us first assume that we have sufficiently regular geodesics
as was  done in \cite{Do96} to motivate our choice of curves. Evidently, a metric of constant zero scalar curvature is a stationary point of the functional \(\calK\).
Consider now two potentials \(\phi_0\) and \(\phi_1\) that realise two distinct metrics of constant scalar curvature, in particular, two stationary points of the \(\calK\)-energy.
By a formal calculation, one obtains
\[
{d^2 \calK \over dt^2} = \int_M \vert \scrD \phi'(t) \vert_\phi^2 \om_\phi^n
\]
But since we do not have higher regularity of solutions to the geodesic equation, 
the term \(\scrD \phi'\), which requires three bounded derivatives, cannot be defined. 
If the calculation were valid however, one could easily deduce that \(\calK\) had to be constant along the geodesic connecting \(\phi_0\) and \(\phi_1\).
Further, one deduces that the term \(\scrD \phi'\) would have to vanish.
As a result, \(\uparrow \bdel \phi'\) would have  to be the real part of a holomorphic vector field (cf. Lemma 1.22.2 in \cite{Gaud}).
Assuming that the same decay rates  proved in 
Proposition \ref{higher-derivatives}
hold for the solutions of the homogeneous complex Monge-Amp\`ere equation,
 \(\uparrow \bdel \phi'\) would have to be a  holomorphic vector field on the entire manifold \(M\) which decays at infinity.
 However, as we shall see in 
 Lemma \ref{bounded-v-field},
 there are no such vector fields but the trivial one. 
 Namely, the differential of the function \(\phi'\) would identically vanish .
One hence concludes that \(\phi'\) is constant in the space direction.
But since the only space-independent solution of the geodesic equation is the linear interpolation in time of the boundary conditions,
one has that \(\om_{\phi_0} = \om_{\phi_1}\).

To overcome the problem of lack of higher regularity of the solutions of the geodesic  problem we will use a family of geodesics that approximate the homogeneous problem and follow a similar path of reasoning for proving that \(\phi_0 = \phi_1\).

Let \(\om\) be the an ALE K\"ahler metric cohomologous to \(\om_0\) such that
\(\rho(\om) \leq 0\), wherein \(\rho(\om)\) is the Ricci form of the K\"ahler form \(\om\).
The existence of such an ALE metric \(\om \in [\om_0]\) is guaranteed since the first Chern class, \(c_1\), is assumed to be non-positive, and we further know that by the extension of the Calabi conjecture to the ALE space any closed real (1,1)-form \(\chi \in [\rho(\om_0)]\) with appropriate asymptotic behaviour
may be realised as the Ricci form of some unique ALE K\"ahler metric.  (see \textsection 8.4 and 8.5 of \cite{Jo00} for more details on the de Rham cohomology on ALE spaces and the proof of the Calabi conjecture on ALE spaces). 
In the case of vanishing first Chern class one could choose \(\om_0\) to be Ricci-flat, and in the case of negative \(c_1\) the form \(\om_0\) could be chosen to be
so that the Ricci form \(\rho(\om_0)\) is a negative on some bounded set and zero  outside of it.
Let us define \(\calG(\phi) :=  \phi'' - {1 \over 2}\vert d \phi' \vert_\phi^2\).
We now integrate by parts and obtain about the second derivative of the \(\calK\) along an arbitrary curve, in particular an approximate geodesic, \(\phi(t)\):
\begin{eqnarray}
\label{12}
	{d^2 \calK \over dt^2} 
	=
	\int_M \vert \scrD  \phi' \vert_\phi^2 \om_\phi^n
	-
	\int_M \calG(\phi) K_\phi \om_\phi^n
\end{eqnarray}
We now prove that the second integral is non-negative as well.
\begin{eqnarray}
-\int_M  \calG(\phi) K_\phi \om_\phi^n
&=& 
- \int_M \calG(\phi) Ric(\om_\phi) \wge \om_\phi^{n-1}
 \\ \nonumber
&=&
-\int_M \calG(\phi) \left(Ric(\om_\phi) - Ric(\om) \right) \wge \om_\phi^{n-1}  
-
\int_M \calG(\phi) Ric(\om) \wge \om_\phi^{n-1}
\\ \nonumber 
&=&
\int_M \calG(\phi) dd^c \log{\om^n_\phi \over \om^n} \wge \om_\phi^{n-1}
-
\int_M \calG(\phi) Ric(\om) \wge \om_\phi^{n-1}
 \\ \nonumber
&=&
-  \int_M d \calG(\phi) \wge d^c   \log{\om^n_\phi \over \om^n}  \wge \om_\phi^{n-1} 
-
\int_M \calG(\phi) Ric(\om) \wge \om_\phi^{n-1}
\\ \nonumber
&=&
\int d \calG(\phi) \wge d^c  \log \calG(\phi) \wge \om_\phi^{n-1}
-
\int_M \calG(\phi) Ric(\om) \wge \om_\phi^{n-1} \\ \nonumber
&=&
\int_M {\left \vert \nabla \calG(\phi) \right \vert_\phi^2 \over \calG(\phi)} \om_\phi^n
-
\int_M \calG(\phi) Ric(\om) \wge \om_\phi^{n-1} \nonumber
\end{eqnarray}

where we have used the fact that \(\log \calG(\phi) = - \log {\om_\phi^n \over \om^n} + \log \eps \). 
Notice that the integration by parts carried out above is meaningful by virtue of
the asymptotics proved in \ref{higher-derivatives} and \ref{coro-higher-derivatives}.
More precisely, the asymptotics guarantee the membership in \(L^1\) of
the integrands, which, then, by Gaffney's extension of  Stokes's theorem on compact riemannian manifold to the case of arbitrary complete manifolds \cite{Gaff54},
proves the validity of integration by parts.

Now since \(Ric(\om) \leq 0\) and \(\calG > 0\), the second term is a non-negative finite quantity.

In other words, along the \(\eps\)-approximate geodesic we have that the \(\calK\)-energy is convex.
Further, since the end points are scalar-flat metrics, 
they are stationary points of the \(\calK\)-energy.
Hence, 
the \(\calK\)-energy is constant along the path, and in particular \(\scrD \phi' = 0\) and we can repeat the argument given in the beginning of the proof with the assumption of the smoothness of geodesics to obtain \(\phi_0 = \phi_1\).

\end{proof}

We now state and prove the following lemma, similar to Lemma 4.3 in \cite{Ch-He-09}, which we used in the previous proof to guarantee that the term \(\phi'\) is indeed constant in space. 
\begin{lemma}
\label{bounded-v-field}
Let \(M\) be an ALE K\"ahler space and \(X\) a holomorphic vector field on \(M\).
Suppose that \(X\) decays to zero at infinity.
Then \(X\) must be the trivial vector field.
\end{lemma}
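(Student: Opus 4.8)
The plan is to show that a holomorphic vector field $X$ on an ALE K\"ahler manifold $M$ (asymptotic to $\bbC^n/G$) that tends to zero at infinity must vanish identically. The natural approach is to combine a maximum-principle / subharmonicity argument for $|X|^2$ with the asymptotic flatness of the metric near infinity. First I would observe that for a holomorphic vector field $X$ on a K\"ahler manifold, the function $|X|^2_\om$ is plurisubharmonic: more precisely, a Bochner-type computation gives $\De_\om |X|^2 \geq 0$ wherever the curvature contribution is controlled, and on the ALE end the curvature decays like $O(r^{-2n-2})$ by the defining asymptotics of $\om$, so $|X|^2$ is subharmonic up to a lower-order term that is integrable and decaying. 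Since $X \to 0$ at infinity, $|X|^2$ attains its supremum either at an interior point of $M$ or approaches $0$ at the end.

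\textbf{Key steps.} (1) Set $u := |X|^2_\om$. Using holomorphicity of $X$ and the K\"ahler condition, derive the Weitzenb\"ock/Bochner identity $\De u = |\nabla X|^2 - \langle \mathrm{Ric}(\om)\cdot X, X\rangle$ (or the appropriate sign depending on conventions), so that $\De u \geq -C|\mathrm{Ric}(\om)|\, u$. (2) On the ALE end $M - S$, we have $|\mathrm{Ric}(\om)| = O(r^{-2n-2})$ from the definition of ALE and the fact that $\nabla^2(\pi_*g - h_0) = O(r^{-2n-2})$. (3) Apply a Phragm\'en--Lindel\"of / removable-singularity-at-infinity argument: since $u \to 0$ at infinity and $u$ is a subsolution of an elliptic equation with decaying zeroth-order coefficient, the maximum principle on the complete manifold forces $u \leq 0$ unless $u$ is constant; but $u \geq 0$ and $u \to 0$, so $u \equiv 0$, whence $X \equiv 0$. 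Alternatively, and perhaps more cleanly, one can integrate: multiply $\De u \geq -C r^{-2n-2} u$ against a suitable cutoff, use Gaffney--Stokes (already invoked earlier in the paper), and conclude $\int_M |\nabla X|^2 \leq 0$ after controlling the curvature term by the decay, giving $\nabla X = 0$; then $|X|^2$ is constant, and since it tends to $0$ it is identically $0$.

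\textbf{Main obstacle.} The delicate point is handling the zeroth-order curvature term: a naive maximum principle gives nothing if $\mathrm{Ric}(\om)$ does not have a favorable sign (and here we do \emph{not} assume $c_1 \leq 0$ in this lemma — it must hold for \emph{all} ALE K\"ahler $M$). The resolution is that the curvature \emph{decays}, so one should use the integral/cutoff form: choose cutoffs $\chi_R$ supported in $\{r < 2R\}$, equal to $1$ on $\{r < R\}$, with $|\nabla \chi_R| \leq C/R$, and estimate
\[
\int_M \chi_R^2 |\nabla X|^2 \leq C\int_M \chi_R^2 |\mathrm{Ric}(\om)|\, |X|^2 + C\int_M |\nabla \chi_R|^2 |X|^2 .
\]
Because $|X|^2 \to 0$ and $|\mathrm{Ric}(\om)| = O(r^{-2n-2})$ while the annulus $\{R < r < 2R\}$ has volume $O(R^{2n})$, both right-hand terms tend to $0$ as $R \to \infty$ (the gradient term uses only boundedness of $|X|^2$ together with $|\nabla\chi_R|^2 = O(R^{-2})$ and volume growth $O(R^{2n})$, which is borderline and is where the decay $X \to 0$ is genuinely needed to kill it). Hence $\nabla X \equiv 0$, and a parallel holomorphic vector field vanishing at infinity is trivial. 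I would present the integral version as the main argument, since it sidesteps any sign assumption on the Ricci curvature and dovetails with the Gaffney--Stokes machinery already in use.
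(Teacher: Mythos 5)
Your Bochner--cutoff argument does not close, and the failure is structural rather than technical. Integrating the identity \(\De |X|^2 = |\nabla X|^2 - \mathrm{Ric}(X,\bar X)\) against \(\chi_R^2\) does give
\(\tfrac12\int_M \chi_R^2 |\nabla X|^2 \,\om^n \le \int_M \chi_R^2\, \mathrm{Ric}(X,\bar X)\,\om^n + C\int_M |\nabla \chi_R|^2 |X|^2\,\om^n\),
but neither right-hand term tends to zero. The curvature term converges (monotonically, once you replace it by its absolute value) to the full integral \(\int_M \mathrm{Ric}(X,\bar X)\,\om^n\): the decay \(|\mathrm{Ric}| = O(r^{-2n-2})\) makes this integral \emph{finite}, not zero, and it has no sign precisely because -- as you correctly insist -- the lemma assumes nothing about \(c_1\). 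Even with arbitrarily fast decay of \(X\), the best the method yields is \(\int_M |\nabla X|^2 = \int_M \mathrm{Ric}(X,\bar X)\), which is vacuous without a sign on the Ricci curvature; so the approach cannot avoid exactly the hypothesis it must avoid. The cutoff term is also misestimated: it is of order \(R^{-2}\cdot R^{2n}\cdot \sup_{\{R\le r\le 2R\}}|X|^2 = R^{2n-2}\sup_{\{R\le r\le 2R\}}|X|^2\), which is borderline only when \(n=1\); for \(n\ge 2\) mere decay \(|X|\to 0\) does not kill it -- you would need the quantitative rate \(|X| = o(r^{1-n})\), which is not in the hypothesis. The maximum-principle variant in your step (3) fails for the same reason: \(\De u \ge -c(x)u\) with \(c\ge 0\) has the zeroth-order term with the unfavorable sign, so an interior positive maximum of \(u=|X|^2\) is not excluded by \(u\to 0\) at infinity.

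The paper's proof is purely complex-analytic and bypasses curvature altogether: on the end \(M - K \simeq \left(\bbC^n - B(0,R)\right)/\Ga\) one lifts \(X\) to a holomorphic vector field \(\tilde X\) on \(\bbC^n - B(0,R)\), extends its components across the ball by Hartogs' theorem (this is where \(n\ge 2\) enters), and then each component is an entire function tending to zero at infinity, hence identically zero by the maximum modulus principle; the identity theorem then forces \(X\equiv 0\) on all of \(M\). If you wish to salvage a Riemannian argument you would have to import a decay rate for \(X\) strong enough to handle the cutoff term \emph{and} find a substitute for the missing sign of \(\int_M \mathrm{Ric}(X,\bar X)\,\om^n\); at that point you are effectively re-proving the componentwise Liouville statement that the paper uses directly.
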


\begin{proof}[Proof of the lemma]
We know that an ALE manifold has a trivialisation outside of some compact set: \(M - K \simeq \left( \bbC^n - B(0,R) \right)/\Ga \) for some ball \(B(0,R)\) and some finite subgroup \(\Ga \subset U(n)\).
We can therefore lift the vector field \(X\) on \(M-K\) to a holomorphic vector field \(\tilde{X}\) on \(\bbC^n - B(0,R)\).
By Hartogs' theorem applied to the individual components of \(\tilde{X}\) however, one can extend the vector field to the entire complex space \(\bbC^n\) to obtain a global holomorphic vector field on \(\bbC^n\).
The decay estimate \eqref{3-5} along with the maximum principle imply that
each component of the vector field \(\tilde{X}\) vanished, hence so does \(X\).

\end{proof}

Having proved the uniqueness of metrics of constant scalar curvature in each K\"ahler class, 
we can now conclude this section by the proof of the boundedness from below of the \(\calK\)-energy on such K\"ahler manifolds.

\begin{proof} [Proof of \ref{boundedness}]
Let \(\psi\) be an arbitrary ALE potential cohomologous to \(\chi\), where \(\chi\) is defined as the proof of \ref{uniqueness}.
Let \(\Phi(t)\) be some smooth enough path connecting the two potentials.
By the calculations in the the proof of \ref{uniqueness}, we have that along the curve \(\Phi(t)\) the \(\calK\)-energy is convex. 
Also, since \(\chi\) realises the minimum of the \(\calK\)-energy, the first derivative of \(\calK\) along \(\Phi\) vanishes at \(\chi\).
Hence, \(\calK\) is strictly increasing along \(\Phi(t)\) which proves the claim.

\end{proof}

\begin{corollary}
\label{coro-scalar-flat}
Let \((M,\om)\) be an ALE K\"ahler manifold with \(c_1(M)=0\).
Then, \(Ric(\om)=0\) if and only if it is of constant zero scalar curvature.
In other words, any scalar-flat K\"ahler metric in this case is Ricci-flat.
Further, in each K\"ahler class there exists one and only one metric of constant scalar curvature which is further Ricci-flat.
\end{corollary}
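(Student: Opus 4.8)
The plan is to split the ``if and only if'' into its two implications and then read off the existence/uniqueness clause from Joyce's extension of the Calabi conjecture together with Theorem \ref{uniqueness}. I would open with a preliminary remark that streamlines everything: on an ALE K\"ahler manifold the scalar curvature decays at infinity (as in Proposition \ref{higher-derivatives}), so a metric of constant scalar curvature in this category is automatically scalar-flat. Hence ``constant scalar curvature'' and ``$K(\om)\equiv 0$'' are interchangeable here, and the first claim reduces to the equivalence $Ric(\om)=0\iff K(\om)\equiv 0$. Its forward direction is immediate, since $K(\om)=\tr_\om Ric(\om)$.

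The content is the converse: if $K(\om)\equiv 0$ and $c_1(M)=0$, then $Ric(\om)=0$. I would argue via the Ricci potential, the elementary route indicated in the introduction. Because $\om$ is ALE, the Hermitian metric induced by $\om^n$ on $-K_M$ is admissible; the hypothesis $c_1(M)=0$ supplies a second admissible metric on $-K_M$ with vanishing curvature, so $\rho(\om)$ is a closed $(1,1)$-form, cohomologous to $0$, with the decay \eqref{line-bundle-decay}. The weighted $dd^c$-lemma (Theorem 8.4.4 of \cite{Jo00}) then gives $f\in C^{2,\al}_{\be+2}$ with $\rho(\om)=dd^c f$; in particular $f\to 0$ at infinity. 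Taking the trace against $\om$ and using $K(\om)\equiv 0$ yields $\De_\om f=0$, so $f$ is harmonic on $M$ and decays to $0$. The maximum principle on an exhausting family of balls $B_T$ (or, in the spirit of the earlier computations, integration by parts of $\int_M \vert\nabla f\vert_\om^2\,\om^n$, which Gaffney's theorem legitimises once the decay of $f$ and $\nabla f$ is noted) forces $f\equiv 0$, whence $\rho(\om)=0$.

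For the final sentence I would combine two inputs. Existence: by Joyce's extension of the Calabi conjecture to ALE K\"ahler manifolds (\textsection\textsection 8.4--8.5 of \cite{Jo00}) every K\"ahler class on $M$ carries an ALE Ricci-flat K\"ahler metric, which by the trivial implication above has constant zero scalar curvature. Uniqueness: Theorem \ref{uniqueness}, applied with $c_1=0\le 0$, gives at most one metric of constant scalar curvature in a given K\"ahler class. Together they show that each K\"ahler class contains exactly one metric of constant scalar curvature and that it is precisely the Ricci-flat one.

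I expect the only delicate point to be the bookkeeping of decay exponents, ensuring that $\rho(\om)$ lies in the range of the weighted $dd^c$-lemma and that the boundary contributions in Gaffney's theorem vanish; but this is routine given the ALE decay of $\om$, hence of $\rho(\om)$, and mirrors the integrations by parts already performed in the proof of Theorem \ref{uniqueness}.
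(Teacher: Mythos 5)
Your proposal is correct, and its existence/uniqueness part is exactly the paper's argument, but your route to the equivalence \(Ric(\om)=0 \Leftrightarrow K(\om)\equiv 0\) is genuinely different. The paper's own proof is a combination of two black boxes: Theorem \ref{uniqueness} (at most one constant-scalar-curvature ALE metric in each class, obtained via the \(\eps\)-geodesic convexity of the \(\calK\)-energy) and Joyce's Theorem 8.5.1 in \cite{Jo00} (existence of a Ricci-flat representative when \(c_1=0\)); the implication ``scalar-flat \(\Rightarrow\) Ricci-flat'' then falls out only indirectly, because the unique constant-scalar-curvature metric in the class must coincide with Joyce's Ricci-flat one. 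You instead prove that implication directly: using admissibility of the metric induced by \(\om^n\) on \(-K_M\), the hypothesis \(c_1=0\) in the sense of Definition \ref{c-1-bundle}, and the weighted \(dd^c\)-lemma, you write \(\rho(\om)=dd^c f\) with \(f\) decaying, trace to get \(\De_\om f = K(\om) = 0\), and kill the decaying harmonic function by the maximum principle or a Gaffney-justified integration by parts. This is in the spirit of the paper's ``alternative proof'' of the same corollary, which, however, goes through Proposition \ref{harmonic} (co-closedness/harmonicity of \(\rho\)) and the Hodge--de Rham--Kodaira decomposition on ALE spaces rather than a Ricci potential. What your version buys: the equivalence becomes independent of the \(\eps\)-geodesic machinery behind Theorem \ref{uniqueness} and is arguably more elementary than the Hodge-theoretic variant; what it does not change: like the paper, you still need Joyce's existence theorem together with Theorem \ref{uniqueness} to obtain the final ``exactly one, and it is Ricci-flat'' clause. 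The only points to keep tidy are the normalisations in \(\tr_\om dd^c f = \De_\om f\) and \(\tr_\om \rho(\om)=K(\om)\), and the check that the weight coming from the weighted \(dd^c\)-lemma applied to forms with the decay \eqref{line-bundle-decay} indeed forces \(f\to 0\) at infinity, which it does.
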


\begin{proof}
Obviously, the fact that being Ricci-flat implies being scalar flat requires no proof. 
So we only prove the converse.
Uniqueness of the scalar-flat metrics in each K\"ahler class is given by Theorem \ref{uniqueness}.
By the work of Joyce, Theorem 8.5.1 in \cite{Jo00}, there always exists a Ricci-flat metric in each K\"ahler class when \(c_1 =0\) in the sense of Definition 
\ref{c-1-bundle}.
These two results together prove the claims.

\end{proof}

In the particular case of \(c_1=0\) we shall give a more direct proof of the uniqueness of ALE metrics of constant scalar curvature.
I am not aware of this proof having been adapted to the case of ALE manifolds and I found it worth mentioning here.

\begin{proposition}
\label{harmonic}
Let \((M, \om)\) be an ALE K\"ahler manifold. 
Then, the Ricci form \(\rho(\om)\) is co-closed, and therefore harmonic, if and only if the scalar curvature \(s\) is a constant. 
\end{proposition}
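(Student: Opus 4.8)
The plan is to express the co-differential of the Ricci form in terms of the scalar curvature using the Kähler identities, and then run the argument on the ALE manifold where the decay rates justify the necessary integration by parts. First I would recall that on any Kähler manifold the Ricci form $\rho(\om)$ is closed, so $\rho$ is harmonic precisely when it is co-closed, i.e. when $d^* \rho = 0$ (equivalently $d^c{}^*\rho=0$). The standard Kähler/Bochner computation gives the pointwise identity $d^*\rho = -\tfrac12\, J\, ds$ (up to a normalisation constant), where $s$ is the scalar curvature and $J$ acts on $1$-forms; concretely, in local holomorphic coordinates $\nabla^{\bbe} R_{\al\bbe} = \nabla_\al s$ up to a constant, which is the contracted second Bianchi identity specialised to the Kähler setting. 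Hence if $s$ is constant then $ds=0$, so $d^*\rho=0$ and $\rho$ is co-closed, giving the ``if'' direction immediately and without any global hypotheses.

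For the converse I would argue as follows. Suppose $\rho(\om)$ is co-closed. Then the pointwise identity $d^*\rho = c\, J\,ds$ forces $J\,ds \equiv 0$, hence $ds \equiv 0$, hence $s$ is \emph{locally} constant; since $M$ is connected, $s$ is constant. Strictly speaking this already finishes the proof, but if one prefers a variational/integral route (which is the one that generalises the philosophy of the rest of the paper), one can instead pair $d^*\rho$ against $ds$: writing $\int_M |ds|^2\,\om^n = \pm\int_M \langle d^*\rho, *\,(\text{something})\rangle$ and integrating by parts. Here the ALE structure enters: the Ricci form satisfies $|\rho(\om)| = O(r^{-2n-2})$ and $ds = O(r^{-2n-3})$ by the definition of an ALE metric, so all integrands are in $L^1$ and the boundary terms over the spheres $\{r = R\}$ — which have volume growth $O(R^{2n-1})$ — tend to zero as $R\to\infty$. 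Thus Gaffney's extension of Stokes's theorem to complete manifolds (\cite{Gaff54}), already invoked earlier in the note, legitimises the integration by parts and yields $\int_M |ds|^2\,\om^n = 0$, whence $ds \equiv 0$ and $s$ is constant.

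The only genuine point requiring care — and the step I expect to be the main obstacle — is the justification of the integration by parts at infinity: one must check that the relevant $1$- and $2$-forms and their first derivatives decay fast enough that the flux through $\{r=R\}$ vanishes in the limit. This is exactly the kind of weighted estimate the ALE hypothesis is designed to supply (the curvature decay $O(r^{-2n-2})$ for an ALE Kähler metric, together with $\nabla^k$-decay of $\pi_*g - h_0$), so it reduces to bookkeeping with the decay exponents rather than to any new analytic input; the pointwise Kähler identity $d^*\rho = c\,J\,ds$ is classical and needs no adaptation to the non-compact setting.
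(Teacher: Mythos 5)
Your primary argument is correct and is essentially the paper's own proof: both rest on the pointwise K\"ahler identity (contracted second Bianchi identity) relating \(d^*\rho\) to \(ds\), which the paper invokes by citing Proposition 1.18.2 of \cite{Gaud} as a ``punctual fact'' independent of the global geometry. The alternative integration-by-parts route via Gaffney's theorem that you sketch is unnecessary here, since the pointwise identity already settles both directions without any asymptotic input.
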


\begin{proof} [Proof of Proposition \ref{harmonic}]
The fact that the Ricci form, \(\rho\), is co-closed, and hence harmonic, is a punctual fact and independent of the global geometry of the K\"ahler space, cf. Proposition 1.18.2 in \cite{Gaud}.
It suffices now to prove that any harmonic form with appropriate asymptotics is indeed co-closed.

\end{proof}

We  can now give the following proof of the uniqueness of Corollary \ref{coro-scalar-flat} using the more classical approach.
\begin{proof}[Alternative proof of Corollary \ref{coro-scalar-flat}]
	By Proposition \ref{harmonic} we know that since the scalar curvature vanishes identically, the Ricci form \(\rho\) must be harmonic.
We may now evoke the Hodge-de Rham-Kodaira decomposition on ALE manifolds, see Theorem 8.4.1 in \cite{Jo00}. 
In particular, this means when \(c_1 = 0\), the only harmonic form is the trivial one.
Noting however that \(\rho \in {1 \over 2\pi} c_1\) 
yields \(\rho = 0\).
By the extension of the Calabi conjecture to ALE K\"ahler spaces detailed in 
\cite{Jo00}, 
we know that in the same K\"ahler class one there exists a Ricci-flat, and hence scalar-flat, metric.

\end{proof}



\end{document}